\documentclass{amsart}
\usepackage{amsfonts}
\usepackage{amsmath,amssymb}
\usepackage{amsthm}
\usepackage{amscd}
\usepackage{graphics}
\usepackage{graphicx}
\theoremstyle{remark}{
\newtheorem{Def}{{\rm Definition}}
\newtheorem{Ex}{{\rm Example}}

}
\newtheorem{Cor}{Corollary}
\newtheorem{Prop}{Proposition}
\newtheorem{Thm}{Theorem}

\newtheorem{Fact}{Fact}
\begin{document}
\title[Cohomology rings of Reeb spaces of new explicit fold maps]{New observations on cohomology rings of Reeb spaces of explicit fold maps}
\author{Naoki Kitazawa}
\keywords{Singularities of differentiable maps; generic maps. Differential topology. Reeb spaces.}
\subjclass[2010]{Primary~57R45. Secondary~57N15.}
\address{Institute of Mathematics for Industry, Kyushu University, 744 Motooka, Nishi-ku Fukuoka 819-0395, Japan\\
 TEL (Office): +81-92-802-4402 \\
 FAX (Office): +81-92-802-4405}
\email{n-kitazawa@imi.kyushu-u.ac.jp}
\maketitle
\begin{abstract}
As a branch of algebraic topology and differential topology of manifolds, the theory of {\it Morse} functions and their higher dimensional versions or {\it fold} maps and its application to studies of manifolds is fundamental, important and interesting. This paper is on explicit construction of fold maps and homology groups and cohomology rings of their {\it Reeb spaces}: they are defined as the spaces of all connected components of preimages of the maps, and in suitable situations inherit some topological information such as homology groups and cohomology rings of the manifolds. 

Constructing these maps explicitly is a fundamental work and for example, will lead us to understand (important classes of) manifolds in more geometric and constructive ways: it seems to be a fundamental, important and new problem in geometry. The construction is also a difficult task even on a manifold which is not so complicated. The author constructed explicit fold maps systematically and performed several calculations of homology groups and cohomology rings of the Reeb spaces.
This paper concerns new observations on this task and presents explicit various Reeb spaces with their cohomology rings: they are of new types.
          

\end{abstract}


\maketitle
\section{Introduction and fundamental notation and terminologies.}
\label{sec:1}
  {\it Fold} maps are differentiable maps regarded as higher dimensional versions of Morse functions and fundamental and important
 tools in investigating algebraic topological and differential topological properties of manifolds. The present paper concerns explicit construction of fold maps. Constructing these maps on explicit manifolds is fundamental and important in these studies and it is also difficult. 
As a new important and interesting problem in geometry, we will develop this to understand the worlds (of some classes of differentiable) manifolds in geometric and constructive ways.
\cite{kitazawa5} and \cite{kitazawa6} are closely related to the present study and \cite{kitazawa}, \cite{kitazawa2}, \cite{kitazawa3}, and \cite{kitazawa4} are also closely related to the two papers before and the present study.
\subsection{Fold maps and their Reeb spaces}
\subsubsection{Fold maps}

First we review the definition of a {\it fold} map. Before this, we explain fundamental terminologies and notation related to {\it singular points} of differentiable maps. 

Throughout this paper, manifolds and maps between two manifolds are smooth (of class $C^{\infty}$) unless otherwise stated. A diffeomorphism is always smooth (of class $C^{\infty}$).

A {\it singular} point of a smooth map $c:X \rightarrow Y$ is a point $p$ at which the rank of the differential ${dc}_p$ of the map is smaller than $\dim Y$. A {\it singular value} of the map is a point realized as a value at a singular point. The set $S(c)$ of all singular points is the {\it singular set} of the map. The {\it singular value set} is the image $c(S(c))$ of the singular set. The {\it regular value set} of the map is the complement of the singular value set. A {\it regular value} is a point in the regular value set.

\begin{Def}
\label{def:1}
Let $m>n \geq 1$ be integers. A smooth map between an $m$-dimensional smooth manifold with no boundary and an $n$-dimensional smooth manifold with no boundary is said to be a {\it fold} map if at each singular point $p$, the function is represented as
$$(x_1, \cdots, x_m) \mapsto (x_1,\cdots,x_{n-1},\sum_{k=n}^{m-i}{x_k}^2-\sum_{k=m-i+1}^{m}{x_k}^2)$$
 for some coordinates and an integer $0 \leq i(p) \leq \frac{m-n+1}{2}$.
\end{Def}

For a fold map, the following three fundamental properties hold.
\begin{itemize}
\item For any singular point $p$, the $i(p)$ in Definition \ref{def:1} is unique {\rm (}$i(p)$ is called the {\it index} of $p${\rm )}. 
\item The set consisting of all singular points of a fixed index of the map is a smooth closed submanifold of dimension $n-1$ with no boundary of the $m$-dimensional manifold. 
\item The restriction to the singular set of the map is a smooth immersion.
\end{itemize}

We introduce terminologies on smooth immersions. For a smooth manifold $X$, we denote the tangent bundle by $TX$ and the tangent vector space at $p \in X$ by $T_p X \subset TX$.

For a smooth immersion $c:X \rightarrow Y$, a {\it crossing} is a point in $Y$ whose preimage consists of at least two points. A crossing $p \in Y$ is said to be {\it normal} if the following properties hold: ${dc}_q$ denotes the differential of $c$ at $q \in X$ as before.

\begin{enumerate}
\item $c^{-1}(p)$ is a finite set consisting of exactly $n(p)$ points.
\item $\dim {\bigcap}_{q \in c^{-1}(p)} {dc}_q(T_q X)+n(p)(\dim Y-\dim X)=\dim Y$.
\end{enumerate}

A {\it stable} fold map is a fold map 
such that the restriction to the singular set is a smooth immersion and that the crossings of the immersion are always normal. We should define a {\it stable} fold map via the notion of {\it {\rm (}Whitney $C^{\infty}${\rm )}} topologies of the spaces of smooth maps. However, the definition before is equivalent to this definition. For fundamental theory of fold maps and more general generic smooth maps including {\it stable} maps, see
 \cite{golubitskyguillemin} for example. In addition, in the present paper, essentially, for fold maps, we concentrate on studies of fold maps such that the restrictions to the singular sets are embeddings.

\subsubsection{Reeb spaces}
The {\it Reeb space} of a continuous map is defined as the space of all connected components of preimages of the map. 

 Let $X$ and $Y$ be topological spaces. For $p_1, p_2 \in X$ and for a continuous map $c:X \rightarrow Y$, 
 we define as $p_1 {\sim}_c p_2$ if and only if $p_1$ and $p_2$ are in
 a same connected component of $c^{-1}(p)$ for some $p \in Y$. 
Thus ${\sim}_{c}$ is an equivalence relation on $X$. We denote the quotient space $X/{\sim}_c$ by $W_c$.
\begin{Def}
\label{def:2}
 $W_c$ is the {\it Reeb space} of $c$.
\end{Def}

 We denote the induced quotient map from $X$ onto $W_c$ by $q_c$. We can define a continuous map $\bar{c}: W_c \rightarrow Y$ uniquely
 so that the relation $c=\bar{c} \circ q_c$ holds.

\begin{Prop}[\cite{shiota}]
\label{prop:1}
For (stable) fold maps, the Reeb spaces are polyhedra and the dimensions are equal to those of the target manifolds.
\end{Prop}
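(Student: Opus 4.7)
The plan is to build a polyhedral structure on $W_{f}$ by exhibiting explicit local polyhedral charts from the fold normal form and gluing them through a triangulation of $N$ adapted to the singular value set. First, I would triangulate $N$ compatibly with the stratification induced by $f(S(f))$: the stability hypothesis forces $f|_{S(f)}$ to be an immersion with only normal crossings, so $f(S(f))$ is a subanalytic codimension-one subcomplex whose multiple-point loci form a Whitney stratification, and $N$ admits a triangulation in which $f(S(f))$ and each of its multiple-point strata are subcomplexes.

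Next I would describe $\bar{f}$ locally above each open stratum using Definition \ref{def:1}. Over a regular value, $f$ is a submersion, so $\bar{f}^{-1}$ of a small enough ball is a disjoint union of copies of the ball indexed by the connected components of the local preimage. Over an interior point of $f(S(f))$ coming from a fold point of index $i$, the local form factors as $\mathrm{id}_{\mathbb R^{n-1}}\times Q$ with $Q=\sum_{k=n}^{m-i}x_{k}^{2}-\sum_{k=m-i+1}^{m}x_{k}^{2}$, so the local Reeb space is $\mathbb R^{n-1}$ times the Reeb space of this single non-degenerate quadratic form. A short case analysis on the signature $(p,q)=(m-i-n+1,i)$ shows the latter is always a one-dimensional polyhedron: a half-line when $i=0$, a line when $p,q\geq 2$, a trivalent Y when $p\geq 2$ and $q=1$, and a four-prong cross when $p=q=1$. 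In each case the product with $\mathbb R^{n-1}$ is an $n$-dimensional polyhedral chart. At a normal crossing of multiplicity $k$ in $f(S(f))$, transversality makes the local Reeb space a product of $\mathbb R^{n-k}$ with $k$ such one-dimensional factors, still of dimension $n$.

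Finally, I would assemble these local charts into a global polyhedron by pulling the triangulation of $N$ back through $\bar{f}^{-1}$, so that each open cell of $N$ lifts to one cell of $W_{f}$ per connected component of its preimage in $M$, with attaching data along codimension-one strata dictated by the fold normal form. The main obstacle is this gluing step: verifying that the pointwise polyhedral charts assemble into a genuinely Hausdorff, locally finite polyhedron rather than merely a space that is locally polyhedral. This is where a subanalytic triangulation theorem in the spirit of \cite{shiota} is essential, since the equivalence relation $\sim_{f}$ is subanalytic but not otherwise globally tame enough for local reasoning alone to produce a global triangulation. Once this is settled, $\dim W_{f}=n$ is immediate from the chart description, because every local chart has dimension exactly $n$, realised by top-dimensional cells that are components of $\bar{f}^{-1}$ of top-dimensional cells of $N$.
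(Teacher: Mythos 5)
The paper does not actually prove Proposition \ref{prop:1}: it quotes it from \cite{shiota}, so the only ``official'' argument is the citation, and your proposal must be judged on its own. Judged that way, it has a genuine gap in the local step. Your computation of the Reeb space of the quadratic germ $Q$ (half-line, line, trivalent $Y$, four-prong cross according to the signature $(p,q)$) is correct, but that quotient is the Reeb space of the \emph{germ} of $f$ at the singular point $p$, and it is not the local model of $W_f$ at $q_f(p)$. A neighborhood of $q_f(p)$ in $W_f$ is the Reeb space of $f$ restricted to a \emph{saturated} neighborhood of the whole connected component $F$ of $f^{-1}(f(p))$ containing $p$; this $F$ is a compact $(m-n)$-dimensional set whose regular part can identify the local sheets that the normal form keeps separate, and over a crossing it may contain several singular points. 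Concretely: at an index-one critical point of a Morse function on a closed orientable surface your table predicts the four-prong cross, but the actual local Reeb graph at the image of such a point is always a trivalent $Y$ (an Euler characteristic count on the saturated neighborhood of the figure-eight singular fiber shows that the nearby fibers have $1$ and $2$ components on the two sides); on a non-orientable surface it can degenerate to an interval; it is never the cross. The same defect affects your description over multiple points of $f(S(f))$. So the ``explicit local polyhedral charts'' are not charts of $W_f$, and even local polyhedrality is not established by the case analysis.

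The second problem is that the step you yourself flag as the main obstacle---assembling a global polyhedron, equivalently taming the equivalence relation ${\sim}_f$---is the entire content of the proposition, and you discharge it by appealing to the triangulation theorem of \cite{shiota} that the paper already cites. What that theorem provides is a triangulation of the stable map $f$ itself (Thom's conjecture for proper Thom maps, of which stable fold maps are a special case); one must then still argue that the space of connected components of fibers of a triangulated map carries a polyhedral structure, which is a known but non-trivial further step and is not the same as ``pulling back the triangulation of $N$ through $\bar{f}^{-1}$,'' since the number and adjacency of fiber components over a cell is exactly the data your local analysis fails to control. In short, the proposal defers to the citation precisely where the difficulty lies, while the part it works out explicitly does not describe $W_f$. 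Only the dimension count survives as written: $\bar{f}$ restricted over the regular value set is a local homeomorphism onto open subsets of $N$, which forces $\dim W_f=n$ once polyhedrality is granted.
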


Reeb spaces are also fundamental and important tools in investigating manifolds via smooth maps whose codimensions are negative. For smooth maps of several suitable classes, they know topological properties of the manifolds much. We will present Proposition \ref{prop:2} as an example of such phenomena. 

A {\it simple} fold map $f$ is a fold map such that the restriction $q_f {\mid}_{S(f)}$ is injective. 

An {\it almost-sphere} is a homotopy sphere obtained by gluing two copies of a standard closed disc on the boundaries by a diffeomorphism. The class of almost-spheres accounts for the class of homotopy spheres except $4$-dimensional spheres which are not diffeomorphic to $S^4$: such manifolds are undiscovered now.

A {\it PID} means a so-called principal ideal domain. 

\begin{Prop}[\cite{saekisuzuoka}  (\cite{kitazawa2} and \cite{kitazawa3})]
\label{prop:2}
Let $m$ and $n$ be integers satisfying $m>n \geq 1$. Let $A$ be a commutative group. Let $M$ be a smooth, closed, connected and orientable manifold of dimension $m$ and $N$ be an $n$-dimensional smooth manifold with no boundary.
  
In this situation, for a simple fold map $f:M \rightarrow N$ such that preimages of regular values
 are always disjoint unions of almost-spheres and that indices of singular points are always $0$ or $1$, the following properties hold.

\begin{enumerate}
\item An induced homomorphism
 ${q_f}_{\ast}:{\pi}_j(M) \rightarrow {\pi}_j(W_f)$ is an isomorphism of groups and ${q_f}_{\ast}:H_j(M;A) \rightarrow H_j(W_f;A)$ and ${q_f}^{\ast}:H^j(W_f;A) \rightarrow H^j(M;A)$ are isomorphisms of modules over $A$ for $0 \leq j \leq m-n-1$.
\item Let $A$ be a commutative ring. Let $J$ be the set of all integers greater than or equal to $0$ and smaller than or equal to $m-n-1$. Let ${\oplus}_{j \in J} H^{j}(W_f;A)$ and ${\oplus}_{j \in J} H^{j}(M;A)$ be algebras where the sums and the products are canonically induced from the cohomology rings $H^{\ast}(W_f;A)$ and $H^{\ast}(M;A)$, respectively: if a product is of degree larger than $m-n-1$, then it is zero. In this situation, $q_f$ induces an isomorphism between these algebras
${\oplus}_{j \in J} H^j(W_f;A)$ and ${\oplus}_{j \in J} H^{j}(M;A)$ over $R$: this is given by the restriction of the original homomorphism ${q_f}^{\ast}:H^{\ast}(W_f;A) \rightarrow H^{\ast}(M;A)$. 
 \item
Let $A$ be a PID and the relation $m=2n$ hold. In this situation, the rank of the module $H_n(M;A)$ over $R$ is twice the rank of the module $H_n(W_f;A)$ over $R$. In
 addition, if $H_{n-1}(W_f;A)$ is a free module over $A$, then $H_n(W_f;A)$, $H_{n-1}(M;A)$ and $H_n(M;A)$ are free modules over $A$. 
\end{enumerate}
\end{Prop}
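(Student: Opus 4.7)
The overall plan is to prove parts (1), (2), and (3) in order, with part (1) as the technical foundation.

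For part (1), I would analyze the fibers of $q_f$ and exploit their high connectivity. Over a point $w$ lying above a regular value of $f$, the fiber $q_f^{-1}(w)$ is a single almost-sphere of dimension $m-n$, hence $(m-n-1)$-connected. Over a singular value $w_0 \in q_f(S(f))$, the simpleness hypothesis forces exactly one singular point to map to $w_0$, and the index restriction (only $0$ or $1$) together with the local normal forms in Definition \ref{def:1} gives an explicit description of the singular fiber: a degenerate point-like fiber at index $0$, or two almost-spheres joined along a point at index $1$. In both cases the fiber is at least $(m-n-1)$-connected. With uniform fiberwise $(m-n-1)$-connectivity, I would set up a CW structure on $W_f$ compatible with the fold decomposition and run a cell-by-cell Whitehead-type argument (equivalently, collapse the Leray spectral sequence of $q_f$ in the relevant range) to obtain the $\pi_j$- and $H_j$-isomorphisms for $j \le m-n-1$; the cohomology statement follows by naturality and the universal coefficient theorem.

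For part (2), the map $q_f^{\ast}: H^{\ast}(W_f;A) \to H^{\ast}(M;A)$ is a ring homomorphism by naturality of cup products. Truncating above degree $m-n-1$ (with higher products set to zero, as prescribed) is compatible with the ring structure on both sides, and since part (1) makes $q_f^{\ast}$ a bijection in each degree in the range $[0, m-n-1]$, multiplicativity promotes this to an isomorphism of the two truncated graded algebras over $A$.

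For part (3), assume $m=2n$, so $W_f$ has dimension at most $n$ by Proposition \ref{prop:1}. Part (1) at $j=n-1=m-n-1$ already yields $H_{n-1}(M;A) \cong H_{n-1}(W_f;A)$, and freeness transfers from one to the other under the hypothesis. To obtain the degree-$n$ rank relation, I would decompose $M$ using $q_f$ along a regular neighborhood of the singular value image versus its complement, and run a Mayer--Vietoris sequence. The complement pulls back to an almost-$n$-sphere bundle whose $H_n$ contributes two independent classes per $n$-cell of the base (the base cell and the fiber sphere), delivering the factor-of-two rank relation. Poincar\'e duality on the closed orientable manifold $M$, combined with the freeness already secured in lower degrees and the universal coefficient theorem, then forces the claimed freeness of $H_n(W_f;A)$, $H_{n-1}(M;A)$, and $H_n(M;A)$.

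The main obstacle is the explicit analysis of singular fibers in part (1): one must verify that the simpleness hypothesis together with the index-$\{0,1\}$ restriction really does keep every singular fiber of $q_f$ at least $(m-n-1)$-connected. Without simpleness, distinct singular points mapping to the same component would glue the fibers into more complicated configurations whose connectivity could drop below the required threshold; without the index restriction, the local quadratic forms could produce fibers homotopy-equivalent to lower-dimensional spheres. Once this is secured, parts (2) and (3) are largely formal, though the middle-dimensional freeness assertion in (3) requires careful bookkeeping to separate free and torsion contributions.
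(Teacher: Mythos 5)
The paper does not prove this proposition at all: it is imported verbatim from Saeki--Suzuoka (and the author's earlier papers \cite{kitazawa2}, \cite{kitazawa3}) and used as a black box, so there is no in-paper argument to compare yours against. Your outline does follow the standard published route -- fiberwise connectivity of $q_f$, a Whitehead/spectral-sequence argument in the range $j\leq m-n-1$, naturality of cup products for the ring statement, and Poincar\'e duality plus a decomposition along the singular value set for the $m=2n$ rank relation -- and parts (2) and (3) are essentially formal once (1) is in place, as you say.

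There is, however, one concrete gap in the step you yourself identify as the main obstacle. Your classification of index-$1$ singular fibers as ``two almost-spheres joined along a point'' silently assumes the fold is separating. An index-$1$ fold point can also effect a non-separating $S^0$-surgery on a single fiber component; in that case the singular fiber component is an almost-sphere $S^{m-n}$ with two points identified, which is homotopy equivalent to $S^{m-n}\vee S^1$ and is \emph{not} $(m-n-1)$-connected once $m-n\geq 2$, so your uniform-connectivity argument would break there. Neither simpleness nor the index restriction to $\{0,1\}$ rules this out; what rules it out is the hypothesis that regular fibers are disjoint unions of almost-spheres, since a non-separating $S^0$-surgery on $S^{m-n}$ produces a fiber of the form of an $S^{m-n-1}$-bundle over $S^1$, which is not an almost-sphere for $m-n\geq 2$. (For $m-n=1$ the non-separating case does occur, but then only $0$-connectivity is needed and the claim survives.) Your proof needs this case analysis made explicit; with it, the fiberwise $(m-n-1)$-connectivity you rely on is correct and the rest of the outline goes through.
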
 
See also \cite{reeb} for Reeb spaces, for example.
\subsection{Explicit fold maps and their Reeb spaces}
It is fundamental and important to construct explicit fold maps. However, even on manifolds which are not so complicated, it is difficult. We present known examples here.

For a topological space $X$, an {\it X-bundle} means a bundle whose fiber is $X$. For a smooth manifold $X$, a {\it smooth} $X$-bundle is an $X$-bundle whose structure group is (a subgroup of) the diffeomorphism group of $X$.
\begin{Ex}
\label{ex:1}
\begin{enumerate}
\item
\label{ex:1.1}
 A stable {\it special generic} map is a specific version of simple fold maps in Proposition \ref{prop:2} and defined as a stable fold map such that the index of each singular point is $0$.
Canonical projections of unit spheres are most simplest examples. According to \cite{saeki}, \cite{saeki2}, \cite{saekisakuma} and \cite{wrazidlo}, homotopy spheres which are not diffeomorphic to standard spheres do not admit special generic maps into Euclidean spaces if the dimensions of the target Euclidean spaces are sufficiently high and smaller than the dimensions of the homotopy spheres.  

Furthermore, the maximal degree $j=m-n-1$ in Proposition \ref{prop:2} can be replaced by $j=m-n$ for a special generic map.

Last, the Reeb space of a (stable) special generic map from a connected and closed manifold of dimension $m$ into ${\mathbb{R}}^n$ satisfying the relation $m>n \geq 1$ is an $n$-dimensional connected and compact manifold we can immerse into ${\mathbb{R}}^n$. This is a fundamental fact explained in \cite{saeki} and so on. We will explain more in Fact \ref{fact:1}.
\item
\label{ex:1.2}
 (\cite{kitazawa}, \cite{kitazawa2} and \cite{kitazawa4})
Let $m>n \geq 1$ be integers. We can construct a stable fold map on a manifold represented as a connected sum of $l>0$ total spaces of smooth $S^{m-n}$-bundles over $S^n$ into ${\mathbb{R}}^n$ satisfying the following three properties.
\begin{enumerate}
\item The singular value set is embedded concentric spheres and by a suitable diffeomorphism on ${\mathbb{R}}^n$, we can map the set to ${\sqcup}_{k=1}^{l+1} \{||x||=k \mid x \in {\mathbb{R}}^n\}$.
\item Preimages of regular values are disjoint unions of standard spheres and n the target space ${\mathbb{R}}^n$, the number of connected components of a preimage increases as we go straight to the origin $0 \in {\mathbb{R}}^n$ of the target Euclidean space starting from a point in the complement of the image. 
\end{enumerate}
The Reeb space is simple homotopy equivalent to a bouquet of $l$ copies of a sphere of dimension $n$.
\item
\label{ex:1.3}
 In \cite{kitazawa5} and \cite{kitazawa6}, stable fold maps such that the restrictions to the singular sets are embedding satisfying the assumption of Proposition \ref{prop:2} have been constructed. They have been obtained by finite iterations of surgery operations ({\it bubbling operations}) starting from fundamental fold maps. More precisely, starting from stable special generic maps and so on, by changing maps and manifolds by bearing new connected components of singular sets one after another, we obtain desired maps.
\end{enumerate}
\end{Ex}

\subsection{Construction of explicit fold maps by bubbling operations and the organization of this paper.}

In this paper, we present further studies on construction in Example \ref{ex:1} (\ref{ex:1.3}). We use bubbling operations. We try new methods and obtain new families of explicit stable fold maps.

The organization of the paper is as the following.
In the next section, we introduce {\it bubbling operations} first introduced in \cite{kitazawa}.
The last section is devoted to main results. We present construction of new families of explicit fold maps, investigate the cohomology rings of the Reeb spaces and we can observe new explicit cases and phenomena. Proposition \ref{prop:2} is a key tool in knowing information on the cohomology rings of the manifolds from Reeb spaces in suitable cases.

Throughout this paper, $M$ is a smooth, closed and connected manifold of dimension $m$, $N$ is a smooth manifold of dimension $n$ with no boundary, the relation $m>n \geq 1$ holds and $f:M \rightarrow N$ is a smooth map. In addition, the structure groups of bundles such that the fibers are manifolds are assumed to be
 (subgroups of) the diffeomorphism groups or equivalently, the bundles are smooth. A {\it linear} bundle is a smooth bundle whose fiber is a $k$-dimensional unit disc or the ($k-1$)-dimensional unit sphere in ${\mathbb{R}}^{k}$ and whose structure group is a subgroup of the $k$-dimensional orthogonal group $O(k)$ acting in a canonical way for $k \geq 1$. 

\thanks{The author is a member of and supported by the project Grant-in-Aid for Scientific Research (S) (17H06128 Principal Investigator: Osamu Saeki)
"Innovative research of geometric topology and singularities of differentiable mappings"
( 
https://kaken.nii.ac.jp/en/grant/KAKENHI-PROJECT-17H06128/
).}


\section{Bubbling operations to fold maps.}
\label{sec:2}



We introduce {\it bubbling operations}, first introduced in \cite{kitazawa5}, referring to the article. We revise some terminologies and ideas from the original definition. However, essentially most of notions on the operation are same. The explanation overlaps some explanation in \cite{kitazawa6}.
\begin{Def}
\label{def:3}
For a stable fold map $f:M \rightarrow N$, let $P$ be a connected
 component of $(W_f-q_f(S(f))) \bigcap {\bar{f}}^{-1}(N-f(S(f)))$, which we may regard as an open manifold diffeomorphic to
 an open manifold $\bar{f}(P)$ in $N$. Let $S$ be a connected, orientable and compact submanifold with no boundary of
 $P$ such that the normal bundle is orientable. Let $N(S)$, ${N(S)}_i$ and ${N(S)}_o$ be small closed tubular neighborhoods
 of $S$ in $P$ such that relations ${N(S)}_i \subset {\rm Int }N(S)$ and $N(S) \subset {\rm Int} {N(S)}_o$ hold. 
Let $Q:={q_f}^{-1}({N(S)}_o)$: $q_f {\mid}_{Q}$
 makes $Q$ a smooth bundle over ${N(S)}_o$.
Assume that we can construct a stable fold map $f^{\prime}$ of an $m$-dimensional closed manifold $M^{\prime}$ into ${\mathbb{R}}^n$ satisfying the following properties.
\begin{enumerate}
\item $M-{\rm Int} Q$ is realized as a compact submanifold (with non-empty boundary) of $M^{\prime}$ of dimension $m$ via a suitable smooth embedding $e:M-{\rm Int} Q \rightarrow M^{\prime}$.
\item $f {\mid}_{M-{\rm Int} Q}={f}^{\prime} \circ e {\mid}_{M-{\rm Int} Q}$ holds.
\item ${f}^{\prime}(S({f}^{\prime}))$ is the disjoint union of $f(S(f))$ and $\bar{f}(\partial N(S))$.
\item $(M^{\prime}-e(M-Q)) \bigcap {q_{f^{\prime}}}^{-1}({N(S)}_i)$ is empty or ${{q_f}^{\prime}} {\mid}_{(M^{\prime}-e(M-Q)) \bigcap {q_{f^{\prime}}}^{-1}({N(S)}_i)}$ makes $(M^{\prime}-e(M-Q)) \bigcap {q_{f^{\prime}}}^{-1}({N(S)}_i)$ a bundle over ${N(S)}_i$.
\end{enumerate}
This situation yields a procedure of constructing $f^{\prime}$ from $f$. We call it a {\it normal bubbling operation} to $f$. Furthermore, $S$ is called the {\it generating manifold} of the operation. 

Furthermore, we can define the following two.
\begin{enumerate}
\item
 ${{f}^{\prime}} {\mid}_{(M^{\prime}-e(M-Q)) \bigcap {q_{f^{\prime}}}^{-1}({N(S)}_i)}$ makes $(M^{\prime}-e(M-Q)) \bigcap {q_{f^{\prime}}}^{-1}({N(S)}_i)$ the disjoint union of two bundles over $N(S)$. In this case, the procedure is called a {\it normal M-bubbling operation} to $f$.
\item ${{f}^{\prime}} {\mid}_{(M^{\prime}-e(M-Q)) \bigcap {q_{f^{\prime}}}^{-1}({N(S)}_i)}$ makes $(M^{\prime}-e(M-Q)) \bigcap {q_{f^{\prime}}}^{-1}({N(S)}_i)$ the disjoint union of two bundles over ${N(S)}_i$ and the fiber of one of the bundles is an almost-sphere. In this case, the procedure is called a {\it normal S-bubbling operation} to $f$.
\end{enumerate}

In the definitions above, we call the fiber of a bundle given by ${q_{{f}^{\prime}}} {\mid}_{(M^{\prime}-e(M-Q)) \bigcap {q_{f^{\prime}}}^{-1}({N(S)}_i)}$ the preimage {\it born by a bubble} if the surgery is not an M-bubbling operation or an S-bubbling operation. In the case of an M-bubbling operation or an S-bubbling operation, connected components of the fiber are also called preimages {\it born by a bubble}. 

In the present situation, let $S$ be represented as the bouquet of finitely many connected, orientable and compact submanifolds without boundaries whose dimensions are smaller than $n$ and let the normal bundles of the submanifolds be orientable
Furthermore, let $N(S)$, ${N(S)}_i$ and ${N(S)}_o$ be small regular neighborhoods
 of $S$ in $P$ such that relations ${N(S)}_i \subset {\rm Int } N(S)$ and $N(S) \subset {\rm Int} {N(S)}_o$
 hold and that these three are isotopic as regular neighborhoods. Similarly,
 we can define similar operations and call the operations a {\it bubbling operation}, an {\it M}{\rm (}{\it S}{\rm )}{\it-bubbling operation} to $f$, and so on. We
 call $S$ the {\it generating polyhedron} of the operation.

\end{Def} 

Hereafter, we abuse notation in Definition \ref{def:3}.

\begin{Ex}
\label{ex:2}
In Definition \ref{def:3}, if $S$ is in an open disc of $P$ or more generally, $q_f {\mid}_{Q}$
 makes $Q$ a trivial bundle over ${N(S)}_o$, then we can perform a (normal) bubbling operation to $f$. 
We can replace the phrase "bubbling" by "M-bubbling" and "S-bubbling". 

Furthermore, we can change the fiber of the original bundle so that the resulting new connected components of the preimage containing no singular point or each connected component of the preimage born by a bubble is a manifold in the following explanations.
\begin{enumerate}
\item Any closed and connected manifold obtained by a handle attachment: more precisely consider the product of the original manifold appearing as the original fiber and the closed interval, attach a handle to one of the connected components of the boundary of the cylinder and consider the resulting connected component of the resulting boundary.
\item A disjoint union of arbitrary two manifolds whose connected sum is diffeomorphic to the original manifold appearing as the fiber.
\end{enumerate}
\end{Ex}

Ideas for the operations are based on stuffs in \cite{kobayashi}, \cite{kobayashi2} and \cite{kobayashisaeki}. Especially, through \cite{kobayashi} and \cite{kobayashi2}, {\it bubbling surgeries} have been established: a {\it bubbling surgery} accounts for the case where the generating polyhedron is a point.  

Hereafter, we introduce key notions, tools and propositions. They are also explained in \cite{kitazawa5} and \cite{kitazawa6}. For example, definitions may be different from ones in these articles. However, essentially they are same.
\begin{Cor}
\label{cor:1}
Let $f$ be a stable fold map. If an M-bubbling operation is performed to $f$ and a new map $f^{\prime}$ is obtained, then $W_f$ is a proper subset of $W_{{f}^{\prime}}$ and for the map $\bar{{f}^{\prime}}:W_{f^{\prime}} \rightarrow N$, the restriction to $W_f$ is $\bar{f}:W_f \rightarrow N$.
\end{Cor}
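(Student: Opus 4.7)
The plan is to exhibit $W_f$ inside $W_{f'}$ via the natural map induced by the embedding $e: M - \mathrm{Int}\, Q \to M'$, which satisfies $f = f' \circ e$. First I would establish the key combinatorial fact about $Q$: since $Q = q_f^{-1}(N(S)_o)$, each connected component of a preimage $f^{-1}(y)$ either lies entirely inside $Q$ (being one of the components collapsed to a point of $N(S)_o \subset W_f$) or is disjoint from $\mathrm{Int}\, Q$. Hence any two $\sim_f$-equivalent points $x_1, x_2 \in M - \mathrm{Int}\, Q$ are joined by a connected component of $f^{-1}(f(x_1))$ lying in $M - \mathrm{Int}\, Q$, and $e$ carries this component to a connected subset of $f'^{-1}(f'(e(x_1)))$, so $e(x_1)$ and $e(x_2)$ are $\sim_{f'}$-equivalent. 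This produces a well-defined continuous map $\iota_0: W_f - \mathrm{Int}\, N(S)_o \to W_{f'}$ satisfying $\iota_0 \circ q_f = q_{f'} \circ e$, and by construction $\bar{f'} \circ \iota_0$ equals $\bar{f}$ on its domain.

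Next I would extend $\iota_0$ across $\mathrm{Int}\, N(S)_o$ using the specific structure of an M-bubbling operation: by definition, the new material $(M' - e(M-Q)) \cap q_{f'}^{-1}(N(S)_i)$ splits as the disjoint union of two bundles over $N(S)$. I would argue that one of these two bundles is the canonical replacement for the removed sheet $Q$, in the sense that it matches $e(\partial Q)$ along the boundary and inherits the smooth bundle structure that $q_f|_Q$ had. This ``restoration bundle'' determines a continuous extension $\iota: W_f \to W_{f'}$ sending each point of $\mathrm{Int}\, N(S)_o \subset W_f$ to the Reeb class of the corresponding fiber of the restoration bundle, with continuity at $\partial N(S)_o$ coming from the matching along $e(\partial Q)$.

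To conclude, I would check three things. Injectivity of $\iota$: because the two new bundles are disjoint from each other and from $e(M - \mathrm{Int}\, Q)$, distinct sheets in $M$ remain distinct sheets in $M'$ and no $\sim_f$-classes merge. The identity $\bar{f'} \circ \iota = \bar{f}$: true on $W_f - \mathrm{Int}\, N(S)_o$ by construction, and true on $\mathrm{Int}\, N(S)_o$ because the restoration bundle maps under $f'$ onto the same base region $\bar{f}(N(S)_i) \subset N$ that $Q$ did under $f$. Properness of the inclusion: the \emph{second} of the two bundles from the M-bubbling contributes Reeb space points of $W_{f'}$ over $\bar{f}(\mathrm{Int}\, N(S)_i)$ that are not in the image of $\iota$, so $W_f \subsetneq W_{f'}$.

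The main obstacle I anticipate is the extension step across $\mathrm{Int}\, N(S)_o$. It requires singling out, from the two bundles provided by the M-bubbling data, the one that plays the role of the restoration of $Q$, and verifying that the resulting extension of $\iota$ glues continuously to $\iota_0$ at $\partial N(S)_o$. This amounts to reading the definition of an M-bubbling carefully: the two bundles are defined over the larger neighborhood $N(S)$, not only over $N(S)_i$, and their behaviour over $\partial N(S)$ — where the new singular values lie — determines which bundle extends the boundary sheet $e(\partial Q)$ and thus serves as the restoration. Everything else is then a direct unwinding of definitions.
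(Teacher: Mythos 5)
The paper offers no proof of Corollary \ref{cor:1} at all: it is stated as an immediate consequence of Definition \ref{def:3}, and the structure of $W_{f^{\prime}}$ that justifies it is only made explicit later, in the proof of Proposition \ref{prop:3} and in Definition \ref{def:5}, where $W_{f^{\prime}}$ is described as $W_f$ with the bubbled space $B(S)$ attached along $N(S)$. Your unwinding therefore supplies more than the paper does, and its outline is sound: the key observation that $Q=q_f^{-1}({N(S)}_o)$ is a union of ${\sim}_f$-classes, so that $e$ induces a map $\iota_0$ on $W_f-{\rm Int}\,{N(S)}_o$ commuting with $\bar{f}$ and $\bar{f^{\prime}}$, together with the fact that the second bundle over ${N(S)}_i$ contributes points of $W_{f^{\prime}}$ outside the image, is exactly the right content. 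One point in your extension step is imprecise, though not fatal. The new singular values lie over $\partial N(S)$, and there the two inner sheets of the Reeb space merge with the single outer sheet: locally $W_{f^{\prime}}$ looks like $\partial N(S)\times Y$ with one outward branch (fiber the old regular fiber) and two inward branches (the two new bundles). Consequently neither of the two bundles over ${N(S)}_i$ is canonically singled out by ``matching $e(\partial Q)$ along the boundary''--- the piece that continues $e(\partial Q)$ is the single connected bundle over ${N(S)}_o-{\rm Int}\,N(S)$, and restoring $N(S)\subset W_f$ inside $W_{f^{\prime}}$ requires \emph{choosing} one of the two inner sheets. Either choice yields a continuous injection $\iota$ with $\bar{f^{\prime}}\circ\iota=\bar{f}$, since the union of the outward branch and one inward branch of the $Y$ is an arc on which $\bar{f^{\prime}}$ restricts to the embedding $\bar{f}{\mid}_{{N(S)}_o}$; so the corollary follows either way, but you should phrase the extension as a choice rather than as canonically determined by boundary matching.
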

We apply this in several situations implicitly in the present paper.

The following proposition is a fundamental and
 key tool.
\begin{Prop}
\label{prop:3}
Let $f:M \rightarrow N$ be a stable fold map. Let $f^{\prime}:{M}^{\prime} \rightarrow N$ be a fold map obtained by an
 M-bubbling operation to $f$. Let $S$ be the generating polyhedron of this M-bubbling
 operation. Let $k$ be a positive integer and $S$ be represented as the bouquet of submanifolds $S_j$ with no boundaries where $j$ is an integer satisfying $1 \leq j \leq k$.  
In this situation, for each integer $0 \leq i<n$, there exist an isomorphism 
$$H_{i}(W_{{f}^{\prime}};R) \cong H_{i}(W_f;R) \oplus {\oplus}_{j=1}^{k} (H_{i-(n-{\dim} S_j)}(S_j;R))$$
 
and an isomorphism $H_{n}(W_{{f}^{\prime}};R) \cong H_{n}(W_f;R) \oplus R$.
\end{Prop}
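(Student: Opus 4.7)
The plan is to exploit the inclusion $W_f \subset W_{f^{\prime}}$ from Corollary \ref{cor:1} and read off $H_{*}(W_{f^{\prime}}; R)$ from the long exact sequence of the pair $(W_{f^{\prime}}, W_f; R)$. The first step is to identify the ``new piece'' of $W_{f^{\prime}}$ beyond $W_f$. By the M-bubbling description in Definition \ref{def:3}, the preimage $q_{f^{\prime}}^{-1}(N(S)_i)$ consists of two bundles over $N(S)_i$, so $W_{f^{\prime}}$ has two sheets over $N(S)_i$; but Corollary \ref{cor:1} embeds the unique old sheet $N(S) \subset W_f$ into $W_{f^{\prime}}$, so one of these two sheets is forced to coincide with the old one, and only a single genuinely new sheet $\tilde{N}(S)$ is added, a copy of $N(S)$ attached to $W_f$ along $\partial N(S) \subset N(S) \subset W_f$ via the new singular value image $\bar{f^{\prime}}(\partial N(S))$. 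Excision then yields
\[
H_i(W_{f^{\prime}}, W_f; R) \cong H_i(\tilde{N}(S), \partial N(S); R) \cong H_i(N(S), \partial N(S); R).
\]

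Second, I would compute this module via duality. After rounding corners at the wedge point, $N(S)$ is a compact orientable $n$-manifold with boundary whose homotopy type is the bouquet $S = S_1 \vee \cdots \vee S_k$. Lefschetz duality gives
\[
H_i(N(S), \partial N(S); R) \cong H^{n-i}(N(S); R) \cong H^{n-i}(S; R),
\]
which equals $R$ in top degree $i=n$ because the bouquet is connected (so $H^{0}(S;R)=R$) and equals $\bigoplus_{j=1}^{k} H^{n-i}(S_j; R)$ for $0 \le i < n$. Applying Poincar\'e duality on each closed orientable $S_j$ of dimension $d_j$, one has $H^{n-i}(S_j; R) \cong H_{i - (n - d_j)}(S_j; R)$, matching the right-hand sides of the proposition exactly.

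Third, I would show the pair's connecting homomorphism vanishes in every degree and split the resulting sequences. Under the excision identification, $\partial \colon H_i(W_{f^{\prime}}, W_f; R) \to H_{i-1}(W_f; R)$ factors as
\[
H_i(N(S), \partial N(S); R) \to H_{i-1}(\partial N(S); R) \to H_{i-1}(N(S); R) \to H_{i-1}(W_f; R),
\]
and the first two arrows compose to zero by exactness of the long exact sequence of the pair $(N(S), \partial N(S))$; since $N(S) \subset W_f$, the full composite vanishes. The resulting short exact sequences
\[
0 \to H_i(W_f; R) \to H_i(W_{f^{\prime}}; R) \to H_i(N(S), \partial N(S); R) \to 0
\]
split via the retraction $r \colon W_{f^{\prime}} \to W_f$ that is the identity on $W_f$ and on $\tilde{N}(S)$ is the natural $\bar{f^{\prime}}$-fiberwise identification with $N(S) \subset W_f$ (agreeing with the identity along $\partial N(S)$).

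The main obstacle I anticipate is the first step: carefully justifying from Definition \ref{def:3} and Corollary \ref{cor:1} that exactly one new sheet, not two, is added over $N(S)_i$ for an M-bubbling. A na\"ive double-counting of the two new bundles would attach two copies of $N(S)$ along $\partial N(S)$, doubling the relative homology and producing a spurious $R^{k}$ rather than a single $R$ in top degree $n$. It is precisely the single-copy model, interpreted through Corollary \ref{cor:1}, that aligns with Lefschetz duality on the \emph{connected} bouquet $S$ and reproduces the stated formula.
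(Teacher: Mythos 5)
Your proposal is correct, and it rests on the same geometric model of $W_{f^{\prime}}$ that the paper uses -- namely that the M-bubbling attaches exactly one new copy $\tilde{N}(S)$ of the regular neighborhood $N(S)$ to $W_f$ along $\partial N(S)$, so that $W_{f^{\prime}}=W_f\cup_{N(S)}B(S)$ where $B(S)$ is the fiberwise double of $N(S)$, i.e.\ the connected sum of linear $S^{n-\dim S_j}$-bundles over the $S_j$ described in the paper's proof and in Definition \ref{def:5}. Where you genuinely diverge is in the homological computation: the paper reads off the new homology from the sphere-bundle structure of $B(S)$ (orientable bundles admitting sections, with the rigorous Mayer--Vietoris bookkeeping deferred to \cite{kitazawa5}), producing the extra summands as classes of the form ``cycle in $S_j$ times fiber sphere $S^{n-\dim S_j}$''; you instead compute $H_i(W_{f^{\prime}},W_f;R)\cong H_i(N(S),\partial N(S);R)\cong H^{n-i}(S;R)$ by excision and Lefschetz duality, convert via Poincar\'e duality on each closed orientable $S_j$, and split the long exact sequence with the folding retraction $W_{f^{\prime}}\rightarrow W_f$. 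Your route is more self-contained and explains transparently why the top degree contributes a single $R$ (it is $H^0$ of the \emph{connected} bouquet) while lower degrees split over the wedge summands -- the very point you flag as the danger of double-counting. Its one cost is that it obscures the explicit generators: the paper's bundle-theoretic description is what makes the \emph{bubbled elements} of Definition \ref{def:4} concrete, and those representatives (not just the abstract isomorphism type) are what the cup-product computations in Theorems \ref{thm:1}--\ref{thm:4.2} actually use; if you wanted your argument to feed into those later results you would still need to trace the Lefschetz duals back to the product cycles in $B(S)$. Do also note that your duality steps silently use the orientability hypotheses on the $S_j$ and their normal bundles from Definition \ref{def:3} (to orient $N(S)$ and each $S_j$); it is worth making that dependence explicit.
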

The following proof is essentially same as a proof of the same proposition in \cite{kitazawa6}.
Rigorous proofs with discussions on Mayer-Vietoris sequences, homology groups of product
 bundles, and so on, are presented in \cite{kitazawa5}.

\begin{proof}
For $S_j$, we can take a small closed tubular neighborhood, regarded as
 the total space of a linear $D^{n-\dim S_j}$-bundle over $S_j$. By the definition of the operation, a small regular neighborhood $N(S)$ of $S$ is represented as a boundary connected sum of these
 closed tubular neighborhoods (isotoped slightly in a suitable way if we need). $W_{f^{\prime}}$ is obtained by attaching a manifold $B(S)$ represented as a connected sum of total spaces
 of linear $S^{n-\dim S_j}$-bundles over $S_j$ ($1 \leq j \leq k$) by considering $D^{n-\dim S_j}$ in the beginning as a hemisphere of $S^{n-\dim S_j}$ and identifying the subspace obtained by restricting the
 space to fibers $D^{m-\dim S_j}$ with the original regular neighborhood. For the manifold represented as a connected sum of total spaces of linear $S^{n-\dim S_j}$-bundles
 over $S_j$, the bundles are orientable and admit sections, corresponding to the submanifolds $S_j$ and regarded as subbundles whose fibers are one point sets $\{0\} \subset D^{n-\dim S_j} \subset S^{n-\dim S_j}$. Careful observations of $W_f$ and $W_{f^{\prime}}$ yield the result.  
\end{proof}
More precisely, a class in $H_{i-(n-{\dim} S_j)}(S_j;R)$ in the isomorphism is, by a natural isomorphism, mapped to an element in $H_{i}(W_{{f}^{\prime}};R)$ considering a cycle of $S_j \subset S \subset W_f \subset W_{f^{\prime}}$ representing the class and the class represented by a fiber diffeomorphic to $S^{n-\dim S_j}$ of the bundle. 

\begin{Def}
\label{def:4}
In the situation of Proposition \ref{prop:3}, as above, for each ($i-(n-\dim S_j)$)-cycle $c$ of an ingredient $S_j$ in the generating polyhedron $S \subset W_f$ and the class $[c] \in H_{i-(n-{\dim} S_j)}(S_j;R)$ represented by this, by a suitable homomorphism, we can define the class ${\rm bub}_{(f,f^{\prime}),S}(c) \in H_i(W_{f^{\prime}};R)$ represented by the cycle just before. We call this class the {\it bubbled element} of $c$.
\end{Def}

\begin{Def}
\label{def:5}
In the situation of Proposition \ref{prop:3},
$W_{f^{\prime}}$ is obtained by attaching a manifold $B((f,f^{\prime}),S)$ represented as a connected sum of total spaces
 of linear $S^{n-\dim S_j}$-bundles over $S_j$ ($1 \leq j \leq k$) by considering $D^{n-\dim S_j}$ in the beginning as a hemisphere of $S^{n-\dim S_j}$ and identifying the subspace obtained canonically by restricting the
 space to fibers $D^{m-\dim S_j}$ with the original regular neighborhood as explained in the proof. Note that the subspace is represented as a boundary connected sum of total spaces
 of linear $D^{n-\dim S_j}$-bundles over $S_j$ ($1 \leq j \leq k$). We call $B((f,f^{\prime}),S)$ a {\it bubbled space} of the generating polyhedron $S$.
\end{Def}

For a finite iteration of $M$-bubbling operations to a stable fold map $f$ to obtain a fold map $f^{\prime}$, we can define {\it bubbled elements} of classes in (each ingredient of) each generating polyhedron, {\it bubbled spaces} of the generating polyhedra, and so on, similarly. See also \cite{kitazawa6}.

\section{Main results}
\label{sec:3}
We will investigate homology groups and cohomology rings of the resulting Reeb spaces
 and present these results as main results.

For a graded commutative module over a commutative ring, we set the {\it $i$-th module} as the module consisting of all elements of degree $i$.
For a graded commutative algebra over a commutative ring in the present paper, we set the $0$-th module as the ring forgetting the ring structure and defining the action by the ring in the canonical way.

\subsection{CPS manifolds}
In \cite{kitazawa6}, a {\it CPS} manifold and a {\it CPS} and {\it GCPS} graded commutative algebras are defined.
\begin{Def}
A manifold $S$ is said to be {\it CPS} if either of the following two properties hold.
\begin{enumerate}
\item $S$ is a standard sphere whose dimension is positive.
\item $S$ is represented as a connected sum or a product of two CPS manifolds.
\end{enumerate}
\end{Def}
We can know the following facts by virtue of fundamental differential topological discussions and omit the proof.
\begin{Prop}
\label{prop:5}
\begin{enumerate}
\item For an integer $k>0$, $k$-dimensional CPS manifolds can be embedded into ${\mathbb{R}}^{k+1}$.
\item All CPS manifolds admit orientation reversing diffeomorphisms.
\end{enumerate}
\end{Prop}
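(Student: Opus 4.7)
The plan is to induct on the depth of a recursive expression of $S$ as a standard positive-dimensional sphere or as a connected sum or product of CPS manifolds. The base case is $S = S^k$, which embeds canonically in $\mathbb{R}^{k+1}$ and admits the reflection $(x_0,\ldots,x_k) \mapsto (-x_0,x_1,\ldots,x_k)$ as an orientation-reversing diffeomorphism.

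For statement (1), the connected sum step is classical: given embeddings $S_i \hookrightarrow \mathbb{R}^{k+1}$ of two $k$-dimensional CPS manifolds (all CPS manifolds are evidently orientable, being built from spheres by products and connected sums), one isotopes them to have disjoint images and joins them by a standard embedded tube to produce an embedding of $S_1 \# S_2$. The product step $S = S_1 \times S_2$ with $\dim S_i = k_i$ and $k_1+k_2 = k$ is the main obstacle, since the naive product embedding lands in $\mathbb{R}^{k_1+1} \times \mathbb{R}^{k_2+1} = \mathbb{R}^{k+2}$, one dimension too many. The key observation is that an orientable closed hypersurface $S_1 \subset \mathbb{R}^{k_1+1}$ has trivial normal line bundle, and viewed inside the larger ambient $\mathbb{R}^{k_1+1} \subset \mathbb{R}^{k+1}$ its normal bundle of rank $k_2+1$ remains trivial (it is the sum of the trivial normal in $\mathbb{R}^{k_1+1}$ with the trivial pullback of the normal to $\mathbb{R}^{k_1+1}$ in $\mathbb{R}^{k+1}$). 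Consequently a tubular neighborhood of $S_1$ in $\mathbb{R}^{k+1}$ is diffeomorphic to $S_1 \times D^{k_2+1}$. Since $S_2$ is compact and embeds in $\mathbb{R}^{k_2+1}$, it fits after rescaling inside $D^{k_2+1}$, yielding the desired embedding $S_1 \times S_2 \hookrightarrow S_1 \times D^{k_2+1} \hookrightarrow \mathbb{R}^{k+1}$.

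For statement (2), I would strengthen the inductive hypothesis to produce an orientation-reversing diffeomorphism which, in some local coordinate chart, has the form of a standard hyperplane reflection; in particular it preserves a small closed disk in that chart setwise and fixes its equatorial cross-section pointwise. The base case is clear from the explicit reflection above. The product step is immediate, since $\phi_1 \times \mathrm{id}_{S_2}$ reverses orientation whenever $\phi_1$ does, and inherits a local reflection form near $(p_1,p_2)$. For the connected sum step, one chooses the gluing disks $D_i \subset S_i$ to be the invariant disks supplied by the strengthened hypothesis; the restrictions of $\phi_1$ and $\phi_2$ to $\partial D_i$ are orientation-reversing reflections of a sphere which, after a rotation of the gluing identification, can be arranged to agree, so $\phi_1$ and $\phi_2$ glue to an orientation-reversing self-diffeomorphism of $S_1 \# S_2$ that still carries a local reflection form at any fixed point of the boundary reflection.

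The principal difficulty is the codimension drop in the product case of (1); the tubular-neighborhood trick above, together with the fact that orientable closed hypersurfaces in Euclidean space are two-sided, is the essential input. For (2), the subtlety is coordinating the orientation-reversing diffeomorphisms on the summands across the gluing sphere, which is what motivates carrying the extra "local reflection form" through the induction rather than merely carrying the bare existence of an orientation-reversing diffeomorphism.
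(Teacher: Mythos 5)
The paper gives no proof of this proposition --- it is stated with the remark that it follows from ``fundamental differential topological discussions'' and the proof is omitted --- so there is nothing to compare line by line; your argument supplies the missing details and is correct. The structural induction on the recursive presentation of a CPS manifold is the right framework, and you correctly identify the one nontrivial point in statement (1): the codimension drop in the product case. Your resolution --- that an orientable closed hypersurface $S_1\subset{\mathbb{R}}^{k_1+1}$ is two-sided, so its normal bundle in the larger ${\mathbb{R}}^{k+1}$ is trivial of rank $k_2+1$, giving a tubular neighborhood $S_1\times D^{k_2+1}$ into which the compact $S_2$ can be rescaled --- is the standard argument (it is exactly how one sees that products of spheres embed in codimension one), and it applies because CPS manifolds are orientable, being generated from spheres by operations preserving orientability. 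For statement (2), strengthening the inductive hypothesis to carry a local hyperplane-reflection model (with an invariant disc) is a sensible way to make the connected-sum step go through: it lets you glue the two orientation-reversing diffeomorphisms along the connecting sphere after conjugating the boundary reflections into agreement, and the product step visibly preserves the local reflection form. The only point worth flagging is that the diffeomorphism type of a connected sum can a priori depend on the orientations of the gluing, but since by induction both summands admit orientation-reversing diffeomorphisms this ambiguity disappears, so your ``rotation of the gluing identification'' remark suffices.
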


\begin{Def}
A graded commutative algebra $A$ over a PID $R$ is said to be {\it CPS} if 
it is isomorphic to the cohomology ring of some CPS manifold.
\end{Def}
\begin{Def}
A graded commutative algebra $A$ over a PID $R$ is said to be {\it GCPS} if it is isomorphic to the cohomology ring of a point or that of a bouquet of a finite number of CPS manifolds whose coefficient ring  $R$. 
\end{Def}

\subsection{A connected sum of two smooth maps whose codimensions are negative.}
We introduce a {\it connected sum} of two smooth maps whose codimensions are negative. We give an essentially same presentation
 in \cite{kitazawa6}.

Let $m>n \geq 1$ be integers, $M_i$ ($i=1,2$) be a closed and connected manifold of dimension $m$ and $f_1:M_1 \rightarrow {{\mathbb{R}}}^n$
and $f_2:M_2 \rightarrow {\mathbb{R}}^n$ be smooth maps.

Set ${{\mathbb{R}}^n}^+:=\{x=(x_1.\cdots,x_n) \in {\mathbb{R}}^n \mid x_1 \geq 0 \}$. 
Let $P_i \subset {\mathbb{R}}^n$ ($i=1,2$) be the set such that for a diffeomorphism ${\phi}_i$ on ${\mathbb{R}}^n$, $P_i={\phi}_i({{\mathbb{R}}^n}^+)$ holds.
 We assume that for the map $f_i {\mid}_{{f_i}^{-1}(P_i)}:{f_i}^{-1}(P_i) \rightarrow P_i$, there exists a pair $(\Phi,\phi)$ of diffeomorphisms satisfying the relation $\phi \circ f_1 {\mid}_{{f_1}^{-1}(P_1)}=f_2 {\mid}_{{f_2}^{-1}(P_2)} \circ \Phi$. We consider the canonical projection of a unit sphere $S^m \subset {\mathbb{R}}^{m+1}$ to ${\mathbb{R}}^n$ defined as the composition of the canonical inclusion with the projection ${\pi}_{m+1,n}((x_1, \cdots, x_n, \cdots, x_{m+1}))=(x_1, \cdots, x_n)$ and the restriction to the intersection of ${{\mathbb{R}}^{m+1}}^{+}:=\{x=(x_1,\cdots,x_{m+1}) \in {\mathbb{R}}^{m+1} \mid x_1 \geq 0\}$ and $S^m$: we also restrict the target space to ${{\mathbb{R}}^n}^+:=\{x=(x_1.\cdots,x_n) \in {\mathbb{R}}^n \mid x_1 \geq 0 \}$. We denote this map by ${\pi}_{m,n,S,+}$. We also assume that there exists a pair $({\Phi}_i,{\phi}_i)$ of diffeomorphisms $\Phi$ and $\phi$ satisfying the relation ${\phi}_i \circ f_i {\mid}_{{f_i}^{-1}(P_i)}={\pi}_{m,n,S,+} \circ {\Phi}_i$ for $i=1,2$.


We can glue the maps ${f_i} {\mid}_{{f_i}^{-1}({\mathbb{R}}^n-{\rm Int} P_i)}:{f_i}^{-1}({\mathbb{R}}^n-{\rm Int} P_i) \rightarrow {\mathbb{R}}^n-{\rm Int} P_i$ ($i=1,2$) on the boundaries to
 obtain a new map and by composing a diffeomorphism from the new target space to ${\mathbb{R}}^n$, we obtain a smooth map into ${\mathbb{R}}^n$ so that the resulting manifold is represented as a connected sum of the original two manifolds. 
The resulting map is called a {\it connected sum} of $f_1$ and $f_2$.
For arbitrary two stable fold maps on closed and connected manifolds of dimension $m$ into ${\mathbb{R}}^n$, we can obtain a connected sum of them. This procedure is, as explained in \cite{kitazawa6} for example, a fundamental tool in constructing a new stable fold map of a given class on a manifold represented as a connected sum of two manifolds admitting maps in the class.
 
\subsection{Special generic maps into Euclidean spaces}
The following fact is a fundamental fact on special generic maps stated in \cite{saeki} and also in Example \ref{ex:1} (\ref{ex:1.2}).
\begin{Fact}
\label{fact:1}
Let $m>n \geq 1$ be integers.
\begin{enumerate}
\item For a closed and connected manifold of dimension $m$, it admits a special generic map into ${\mathbb{R}}^n$ if and only if it admits a stable simple fold map as in Proposition \ref{prop:3} whose Reeb space is a compact and connected manifold of dimension $n$ we can immerse into ${\mathbb{R}}^n$.
\item For a compact and connected manifold of dimension $n$ we can smoothly immerse into ${\mathbb{R}}^n$, there exists a closed and connected manifold of dimension $m$ admitting a stable special generic map into ${\mathbb{R}}^n$ whose Reeb space is diffeomorphic to the $n$-dimensional compact manifold. Furthermore, the special generic map satisfies the following properties. 
\begin{enumerate}
\item On the preimage of the interior of the Reeb space, the map onto this interior gives a $S^{m-n}$-bundle. 
\item On the preimage of a small collar neighborhood of the Reeb space, the composition of the map onto this neighborhood with the canonical projection onto the boundary gives a linear $S^{m-n+1}$-bundle.
\end{enumerate}
\end{enumerate}
\end{Fact}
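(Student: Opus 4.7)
The overall plan is to read everything off the local normal form at an index-$0$ singular point, and in part (2) to construct the map by hand by gluing standard bundles over $P$. For the forward direction of (1) I would start with a special generic map $f:M\to {\mathbb{R}}^n$ and extract three consequences directly from the local model $(x_1,\ldots,x_{n-1},\sum_{k=n}^{m}x_k^2)$ at each singular point: (a) preimages of regular values sufficiently close to $f(S(f))$ are disjoint unions of standard $(m-n)$-spheres, hence of almost-spheres; (b) $q_f$ collapses the local $S^{m-n}$-fibre to a point along the critical image, so $W_f$ is locally a closed half-space of ${\mathbb{R}}^n$ and is therefore an $n$-manifold with boundary equal to $q_f(S(f))$; (c) $\bar f:W_f\to{\mathbb{R}}^n$ is a local diffeomorphism off $q_f(S(f))$ and the inclusion of a half-space along $\partial W_f$, hence an immersion between equidimensional manifolds. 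The standing assumption in the paper that $f\mid_{S(f)}$ is an embedding makes $q_f\mid_{S(f)}$ injective, so $f$ is simple.

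For the backward direction of (1), I would assume $f$ is stable, simple, with indices in $\{0,1\}$, almost-sphere regular fibres, and $W_f$ an $n$-manifold immersed in ${\mathbb{R}}^n$, and then rule out index-$1$ singularities. At such a point the local model is $(x_1,\ldots,x_{n-1},\sum_{k=n}^{m-1}x_k^2-x_m^2)$; tabulating the connected components of $f^{-1}(y_1,\ldots,y_n)$ as $y_n$ crosses $0$ yields a local picture of $W_f$ that is ${\mathbb{R}}^{n-1}$ times a ``tripod'' (one component above, two below) when $m\ge n+2$, and ${\mathbb{R}}^{n-1}$ times an ``$X$'' (two components above, two below, meeting at a single point) when $m=n+1$. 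Neither is a topological manifold at the central point, so every singularity must have index $0$ and $f$ is itself special generic.

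For (2), given $P$ compact, connected, and smoothly immersed in ${\mathbb{R}}^n$ via $\iota$, I would build $M$ and $f$ by explicit assembly. Over $\operatorname{Int} P$ take the trivial bundle $E_1 := \operatorname{Int} P \times S^{m-n}$; over a collar $c:\partial P\times [0,\varepsilon)\hookrightarrow P$ of $\partial P$ take the trivial bundle $E_2 := \partial P \times D^{m-n+1}$; glue these along $\partial P\times (0,\varepsilon)\times S^{m-n}$ via $(p,t,\omega)\mapsto (p,\sqrt{t}\,\omega)$ to obtain a closed connected $m$-manifold $M$. Define $f$ by sending $(p,\omega)\in E_1$ to $\iota(p)$ and $(p,v)\in E_2$ to $\iota(c(p,|v|^2))$. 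At each singular point, which is of the form $(p,0)\in E_2$, the normal form of Definition~\ref{def:1} with $i=0$ can be read off directly from the definition of $f$, so $f$ is special generic with Reeb space $P$. Properties (a) and (b) of the statement then follow from the construction, the linear bundle in (b) being the $D^{m-n+1}$-bundle $E_2$ over $\partial P$.

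The hard part will not be any single deep step but the accumulation of routine yet delicate checks: verifying the smooth (not merely topological) manifold structure of $W_f$ in the forward direction of (1) along $q_f(S(f))$; arranging stability of the constructed $f$ in (2), which requires putting $\iota\mid_{\partial P}$ into normal-crossing position by a small transversality perturbation of the immersion; and, in the backward direction of (1), handling the edge case $m=n+1$ carefully, since regular fibres can already be disconnected there and the component-counting near an index-$1$ singular point must still give the non-manifold local Reeb-space model described above.
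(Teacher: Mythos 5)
The paper itself gives no proof of Fact~\ref{fact:1}: it is quoted as known, with the ``only if'' direction of (1) and all of (2) going back to Saeki's {\it Topology of special generic maps of manifolds into Euclidean spaces} and the ``if'' direction of (1) being essentially the Saeki--Suzuoka reconstruction theorem. So you are really being measured against the literature. Your part (2) is the standard construction ($M=\partial(P\times D^{m-n+1})$ with corners rounded, folded onto $P$ and composed with the immersion), and your ``only if'' direction of (1) is the standard reading of the definite-fold normal form; both are sound modulo the routine checks you already flag (note that the $S^{m-n+1}$ in property (b) of the statement should be read as $D^{m-n+1}$, as you implicitly do, and that to get almost-sphere fibres over \emph{all} regular values, not just those near $f(S(f))$, you must propagate the sphere fibre from the boundary of $W_f$ through the interior using that $q_f$ is a fibration there).

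The genuine gap is in the ``if'' direction of (1). The tripod and ``X'' you tabulate are the Reeb quotients of the \emph{local} normal form; the local picture of $W_f$ at $q_f(p)$ is the quotient of a neighbourhood of the whole fibre component through $p$, and depends on how the local sheets assemble into global fibre components. For $m-n\geq 2$ your argument can be repaired: if the two ``lower'' sheets lay in the same global component, the fibre just above the critical level would be obtained by surgery on a non-separating $S^{m-n-1}$ in an almost-sphere, which cannot exist since $H_{m-n-1}$ of an almost-sphere vanishes; hence the tripod is forced and $W_f$ fails to be a manifold there. But for $m=n+1$ the configuration ``one circle below, one circle above'' (a band surgery on a single circle returning a single circle, with singular fibre a figure eight) is not excluded by the hypotheses; in that case the local Reeb space is an interval times ${\mathbb{R}}^{n-1}$, i.e.\ a manifold, so index-$1$ points survive and $f$ is \emph{not} special generic. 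The conclusion of the Fact is only that $M$ admits \emph{some} special generic map, and in this residual case one must reconstruct one --- by showing that $M$ is still the union of an $S^{m-n}$-bundle over $W_f$ minus a collar and a $D^{m-n+1}$-bundle over $\partial W_f$, or by eliminating the inessential index-$1$ locus --- which is exactly the nontrivial content of Saeki--Suzuoka and is missing from your argument. As written, your closing sentence asserts the non-manifold local model in the $m=n+1$ case rather than proving it, and the assertion is false in general.
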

\begin{Def}
In the latter half of Fact \ref{fact:1} above, we can construct the map so that the two bundles are trivial smooth bundles (which may not be trivial linear bundles) on a suitable $m$-dimensional manifold. In this case, if the restriction to the singular set is an embedding, then we say that the map has {\it almost-trivial monodromies}.
\end{Def}
In Example \ref{ex:1} (\ref{ex:1.1}) special generic maps whose Reeb spaces are standard discs are presented, for example. They have almost-trivial monodromies.

Including the examples, most of special generic maps in existing studies essentially satisfy the following condition.
\begin{Def}
Let $m>n\geq 1$ integers.
A special generic map from a closed and connected manifold of dimension $m$ into ${\mathbb{R}}^n$ is said to be a {\it standard GCPS} special generic map if the following properties hold.
\begin{enumerate}
\item The Reeb space is represented as a boundary connected sum of finitely many manifolds each of which is represented as a product of a CPS manifold and a standard closed disc: it is simple homotopy equivalent to a bouquet of finite numbers of CPS manifolds.
\item The restriction map to the singular set is an embedding.
\end{enumerate}
\end{Def}
\begin{Ex}
\label{ex:3}
\begin{enumerate}
\item Let $n=2$. A result in \cite{saeki} shows that a stable special generic map into the plane on a manifold of dimension $m \geq 3$ is essentially regarded as a standard GCPS special generic map. In fact the Reeb space is obtained as a boundary connected sum of finite copies of $S^1 \times D^1$.
\item
Let $n=3,4$.
Saeki's results and Nishioka's results, obtained later (\cite{nishioka} and \cite{saeki}), imply that a $5$-dimensional closed and simply-connected manifold admits a special generic map into ${\mathbb{R}}^n$ if and only if it is represented as a connected sum of the total spaces of $S^3$-bundles over $S^2$. According to the results, such a manifold also admits a standard GCPS special generic map into ${\mathbb{R}}^n$ whose Reeb space is represented as a boundary connected sum of finite copies of $S^2 \times D^{n-2}$.   
\end{enumerate}
\end{Ex}
\begin{Def}
Let $m>n \geq 1$ be integers.
Consider a stable fold map $f:M \rightarrow N$ on a closed and connected manifold of dimension $m$ into a manifold with no boundary of dimension $n$ such that the restriction to the singular set is an embedding. We can see the following properties.

\begin{enumerate}
\item On the preimage of each connected component of $W_f-q_f(S(f))$, the map onto this interior gives a trivial bundle. 
\item On the preimage of a small regular neighborhood of each connected component of $q_f(S(f))$, the composition of the map onto this neighborhood with the canonical projection onto $q_f(S(f))$ gives a smooth bundle.
\end{enumerate}

If these bundles are smooth trivial bundles, then we say that the map has {\it almost-trivial monodromies}.
\end{Def}

\begin{Ex}
\label{ex:1.4}
Example \ref{ex:2} implies that we can construct stable fold maps having almost-trivial monodromies one after another by considering compact and orientable submanifolds with no boundaries in the Reeb spaces preimages of which contain no singular points starting from a stable fold map having almost-trivial monodromies. 
\end{Ex}

\subsection{New results}
We show new results on cohomology rings of Reeb spaces. In the situation where we can apply Proposition \ref{prop:2}, we can obtain a result on those of the resulting manifolds. For example, starting from a stable standard special generic map and performing a finite iteration of S-bubbling operations starting from the map, we can obtain various maps to which we can apply Proposition \ref{prop:2}.

We introduce the {\it dual} of a suitable element of an module. 
Let $R$ be a commutative ring having exactly one identity element $1 \neq 0 \in R$. 
Let $A$ be an module over $R$.
Let $a \in A$ be a non-zero element satisfying the following properties.
\begin{enumerate}
\item We cannot represent as $a=ra^{\prime}$ for some $r \in R$ which is not a unit and $a^{\prime} \in A$.
\item $ra = 0$ if and only if $r=0 \in R$.
\item Let $<a>$ be the submodule of $A$ generated by the one element set $\{a\} \subset A$ and $A$ is represented as the internal direct sum of $<a>$ and a suitable submodule $B \subset A$. 
\end{enumerate}
\begin{Def}
We call $a$ as above a {\it unit free generator} or {\it UFG} of $A$. 
\end{Def}
We can define a homomorphism $a^{\ast}$ from $A$ into $R$ between graded commutative algebras over $R$ satisfying the following properties.
\begin{enumerate}
\item $a^{\ast}(a)=1 \in R$.
\item $a^{\ast}(b)=0$ for $b \in B$ and any $B$ as in the condition.
\end{enumerate}
\begin{Def}
We call $a^{\ast}$ the dual of $a$.
\end{Def}
More explicitly, let $A$ be a homology group whose coefficient ring is a PID $R$. In this case, the dual of an element $a \in A$ is canonically regarded as a cohomology class. We abuse this principle in this section.
\begin{Thm}
\label{thm:1}

Let $R$ be a PID.
Let $f:M \rightarrow {\mathbb{R}}^n$ be a stable fold map. We consider
 a special generic map $f_0$ such that the following properties hold.
\begin{enumerate}
\item The restriction $f_0 {\mid}_{S(f_0)}$ is an embedding.
\item The Reeb space $W_{f_0}$ is a compact and connected manifold we can smoothly embed into ${\mathbb{R}}^n$.
\end{enumerate}
We also consider a finite iteration of normal bubbling operations to the map $f_0$ to obtain a new map $f_1$. Next, we consider a connected sum of $f$ and the map $f_1$ just before. The following two statements hold. 
\begin{enumerate}
\item \label{thm:1.1}
By this construction, we can obtain a map $f^{\prime}$ satisfying the following properties.

\begin{enumerate}
\item 
\label{thm:1.1.1}
$H^{\ast}(W_{f^{\prime}};R)$ is isomorphic to and identified with a graded commutative algebra $B$ over $R$ obtained from the direct sum of $H^{\ast}(W_{f};R)$ and a suitable algebra $A$, which we will explain later. More precisely, $B$ satisfies the following properties.
\begin{enumerate}
\item The $i$-th module is the direct sum of the $i$-th modules of the two summands for $i>0$.
\item For a pair $(a_{i_1,1},a_{i_1,2}) \in H^{\ast}(W_{f};R) \oplus A$ of elements of degree $i_1>0$ and a pair $(a_{i_2,1},a_{i_2,2}) \in H^{\ast}(W_{f};R) \oplus A$ of elements of degree $i_2>0$, which are defined as elements of degree $i_1$ and degree $i_2$, respectively, the product is $(a_{i_1,1}a_{i_2,1},a_{i_1,2}a_{i_2,2})$ and of degree $i_1+i_2$.
\item For $r \in R$, which is also an element of degree $0$ and a pair $(a_{i,1},a_{i,2}) \in A_1 \oplus A_2$ of elements of degree $i>0$, which is defined as an element of degree $i$, the product is $(ra_{i,1},ra_{i,2}) \in A_1 \oplus A_2$ and of degree $i$.
\end{enumerate} 

\item
\label{thm:1.1.2}
$A$ is, as a graded module over $R$, represented as a direct sum of a free finitely generated module over $R$ obtained by forgetting the ring structure of a graded commutative algebra $A_0$ over $R$, which is isomorphic to and identified with the cohomology ring of a compact and connected manifold of dimension $n$ we can embed smoothly into ${\mathbb{R}}^n$ whose coefficient ring is $R$, and a finitely generated graded module over $R$.
\item 
\label{thm:1.1.3}
$A_0$ is regarded as a subalgebra of $A$ in a canonical way.
\end{enumerate}
\item
\label{thm:1.2}
 Let $n \geq 3$. Let $l \geq 0$ be an integer. Let $\{S_j\}_{j=1}^{l}$ be a family of closed, connected and orientable manifolds satisfying the following properties.
\begin{itemize}
\item $2{\dim} S_j \leq n$.
\item There exists $c_{S_j} \in H_{k_j}(S_j;R)$ for some $k_j \geq 0$ satisfying the following properties.
\begin{itemize}
\item $c_{S_j}$ is a UFG of $H_{k_j}(S_j;R)$. 
\item $c_{S_j}$ is represented by a closed submanifold $F_j \subset S_j$ with no boundary whose normal bundle is trivial.
\end{itemize}
\end{itemize}
Suppose also that $f_0$ has almost-trivial monodromies and the generating polyhedra for construction of $f_1$ are of dimension smaller than $n-1$. In this situation, we can obtain a map $f^{\prime \prime}$ satisfying the following properties by a finite iteration of M{\rm (}S{\rm )}-operations to $f^{\prime}$ before such that at each step the generating manifold is diffeomorphic to $S_j$ for $1 \leq j \leq l$: we use this notation also for these generating manifolds.
\begin{enumerate}
\item 
\label{thm:1.2.1}
The module $H_{i}(W_{f^{\prime \prime}};R)$ over $R$ is represented as a direct sum of $H_{i}(W_{f^{\prime}};R)$ and ${\oplus}_{j=1}^{l} H_{i-(n-\dim S_j)}(S_j;R)$. $H^{i}(W_{f^{\prime \prime}};R)$ is as an module over $R$ represented as a direct sum of $H^{i}(W_{f^{\prime}};R)$ and ${\oplus}_{j=1}^{l} H^{i-(n-\dim S_j)}(S_j;R)$. Moreover, we can define the {\it bubbled element} ${\rm bub}_{(f^{\prime},f^{\prime \prime}),S_j}(c) \in H^{i}(W_{f^{\prime \prime}};R)$ of $c \in H^{i-(n-\dim S_j)}(S_j;R)$ as is done in Proposition \ref{prop:3} satisfying similar properties. Let $i_{(f^{\prime},f^{\prime \prime})}:W_{f^{\prime}} \rightarrow W_{f^{\prime \prime}}$ be the canonical inclusion.

In this situation, we can define a monomorphism ${i_{(f^{\prime},f^{\prime \prime})}}^{{\ast}^{\prime}}:H^{j}(W_{f^{\prime}};R) \rightarrow H^{j}(W_{f^{\prime \prime}};R)$ and an isomorphism ${\phi}_{(f^{\prime},f^{\prime \prime})}:H^{i}(W_{f^{\prime}};R) \oplus {\oplus}_{j=1}^{l} (H^{i-(n-\dim S_j)}(S_j;R)) \rightarrow H^{i}(W_{f^{\prime \prime}};R)$ by ${\phi}_{(f^{\prime},f^{\prime \prime})}((c_1,c_2))={i_{(f^{\prime},f^{\prime \prime})}}^{{\ast}^{\prime}}(c_1)+{\rm bub}_{(f^{\prime},f^{\prime \prime}),S_j}(c_2)$
 for $c_1 \in H^{i}(W_{f^{\prime}};R)$ and $c_2 \in H^{i-(n-\dim S_j)}(S_j;R)$.
Moreover, corresponding the duals to bubbled elements which are UFGs, we can obtain an isomorphism from a free submodule of ${\rm bub}_{(f^{\prime},f^{\prime \prime}),S_j}(H_{i-(n-\dim S_j)}(S_j;R))$ onto a free submodule of ${\rm bub}_{(f^{\prime},f^{\prime \prime}),S_j}(H^{i-(n-\dim S_j)}(S_j;R))$ and if $H_{i-(n-\dim S_j)}(S_j;R)$ is free, then this isomorphism can be obtained as one from ${\rm bub}_{(f^{\prime},f^{\prime \prime}),S_j}(H_{i-(n-\dim S_j)}(S_j;R))$ onto the submodule of ${\rm bub}_{(f^{\prime},f^{\prime \prime}),S_j}(H^{i-(n-\dim S_j)}(S_j;R))$ generated by the set of all UFGs. 
\item
\label{thm:1.2.2}
The products of two elements in $H^{\ast}(W_{f^{\prime \prime}};R)$ satisfy the following properties.
\begin{enumerate}
\item
\label{thm:1.2.2.1}
 Products of elements in ${i_{(f^{\prime},f^{\prime \prime}),S}}^{{\ast}^{\prime}}(H^{\ast}(W_{f^{\prime}};R))$ are canonically induced ones.
\item
\label{thm:1.2.2.2}
 The product of an element in ${\rm bub}_{(f^{\prime},f^{\prime \prime}),S_{j_1}}(H^{i_1-(n-\dim S_{j_1})}(S_{j_1};R))$ and an element in ${\rm bub}_{(f^{\prime},f^{\prime \prime}),S_{j_2}}(H^{i_2-(n-\dim S_{j_2})}(S_{j_2};R))$ always vanishes for $i=i_1,i_2 \geq \frac{n}{2}$ and $1 \leq j_1,j_2 \leq l$.
\item 
\label{thm:1.2.2.3}
Set $S:=S_j$, $F:=F_j$ and $k:=k_j$ for $1 \leq j \leq l$. 
Define $Q_{\dim S-k,W_{f_0},R} \subset H_{\dim S-k}(W_{f_0};R)$ as the submodule of satisfying the following properties.
\begin{enumerate}
\item It is generated by UFGs represented by standard spheres smoothly embedded in the interior of a small colloar neighborhood of $\partial W_{f_0}$.
\item It is free.
\item $H_{\dim S-k}(W_{f_0};R)$ is the internal direct sum of a submodule ${\bar{Q}}_{\dim S-k,W_{f_0},R}$ and it.                                                                                                    
\end{enumerate}
Let the rank of $Q_{\dim S-k,W_{f_0},R} \subset A_0$ be $b \geq 0$ and take a basis $\{e_{j^{\prime}}\}_{j^{\prime}=1}^{b}$. Note that $A_0$ is regarded as an subalgebra $H^{\ast}(W_{f_0};R) \subset H^{\ast}(W_{f^{\prime}};R)$, free, and also that as a graded module over $R$, $Q_{\dim S-k,W_{f_0},R}$ is isomorphic to and identified with a submodule of $A_0$ via the homomorphism on $Q_{\dim S-k,W_{f_0},R} \subset H_{\dim S-k}(W_{f_0};R)$ itself corresponding each homology class which is UFG to its dual: the image is a submodule of $A_0$
 or $H^{\ast}(W_{f_0};R)$. We can define the dual of $e_{j^{\prime}}$ and denote it by ${e_{j^{\prime}}}^{\ast} \in H^{\dim S-k}(W_{f^{\prime}};R)$ {\rm (}we may regard that the relation $W_{f_0} \subset W_{f_1} \subset W_{f^{\prime}}$ holds{\rm )}. Let $\{a_{j^{\prime}}\}_{j^{\prime}=1}^{b}$ be a sequence of elements of $R$ each $a_{j^{\prime}}$ of which is represented as $a_{j^{\prime},0} \in \mathbb{Z}$ times the identity element $1$. 
We can define the dual ${c_S}^{\ast}$ so that ${c_S}^{\ast}(c_S)=1$ and that by considering $H_k(S;R)$ as the internal direct sum of the submodule generated by the one element set $\{c_S\}$ and a suitable submodule $C_S$, ${c_S}^{\ast}(C_S)=\{0\}$. We denote the Poincar\'{e} dual to ${c_S}^{\ast}$ in $S$ by ${\rm PD}({c_S}^{\ast})$. Fix a generator of $H^{(n-\dim S)-(n-\dim S)}(S;R)$, isomorphic to $R$: it is regarded as an element of degree $n-\dim S$ in the whole cohomology ring $H^{\ast}(W_{f^{\prime \prime}};R)$ considering the bubbled element. In this situation, the following properties hold.
\begin{enumerate}
\item 
\label{thm:1.2.2.3.0}
 The product of an element in
 ${i_{(f^{\prime},f^{\prime \prime}),S}}^{{\ast}^{\prime}}({{\bar{Q}}_{\dim S-k,W_{f_0},R}}^{\ast})$
 where ${{\bar{Q}}_{\dim S-k,W_{f_0},R}}^{\ast}$ is the module generated by the
 set of all duals of all UFGs in ${\bar{Q}}_{\dim S-k,W_{f_0},R}$
 and an element in ${\rm bub}_{(f^{\prime},f^{\prime \prime}),S_{j_1}}(H^{i_1-(n-\dim S_{j_1})}(S_{j_1};R))$ vanishes.
\item
\label{thm:1.2.2.3.1}
The product of ${i_{(f^{\prime},f^{\prime \prime}),S}}^{{\ast}^{\prime}}({e_{j^{\prime}}}^{\ast})$ and a fixed generator of $H^{(n-\dim S)-(n-\dim S)}(S;R)$, isomorphic to $R$, which is regarded as an element of degree $n-\dim S$ in the whole cohomology ring $H^{\ast}(W_{f^{\prime \prime}};R)$ considering the bubbled element, is $a_{j^{\prime},0}$ times the bubbled element of the dual of ${\rm PD}({c_S}^{\ast}) \in H_{\dim S-k}(S;R)$: it is a class of $H^{n-k}(W_{f^{\prime \prime}};R)$. 

\item
\label{thm:1.2.2.3.2}
 The product of ${i_{(f^{\prime},f^{\prime \prime}),S}}^{{\ast}^{\prime}}({e_{j^{\prime}}}^{\ast})$
 and the dual ${c_S}^{\ast}$  of $c_S \in H_{k}(S;R)$, regarded as an element of degree $n-\dim S+k$ in the whole cohomology ring considering the bubbled element, is $a_{j^{\prime},0}$ times a fixed generator of $H^{n-(n-\dim S)}(S;R)$, isomorphic to $R$, which is regarded as a class of $H^{n}(W_{f^{\prime \prime}};R)$ considering the bubbled element.

\item 
\label{thm:1.2.2.3.3}
For $i \neq 0,\dim S-k$, the product of each element of $${i_{(f^{\prime},f^{\prime \prime}),S}}^{{\ast}^{\prime}}(H^{i}(W_{f^{\prime}};R))$$ and a fixed generator of $H^{(n-\dim S)-(n-\dim S)}(S;R)$, isomorphic to $R$, which is regarded as an element of degree $n-\dim S$ in the whole cohomology ring $H^{\ast}(W_{f^{\prime \prime}};R)$ considering the bubbled element, is zero.

For $i \neq 0,\dim S-k$, the product of each element of $${i_{(f^{\prime},f^{\prime \prime}),S}}^{{\ast}^{\prime}}(H^{i}(W_{f^{\prime}};R))$$ and the dual of $c_S \in H_{k}(S;R)$, regarded as an element of degree $n-\dim S+k$ in the whole cohomology ring considering the bubbled element, is zero.
\end{enumerate}
\end{enumerate}
\end{enumerate}
\end{enumerate}
\end{Thm}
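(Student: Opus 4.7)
The plan is to first establish statement (\ref{thm:1.1}) via a Mayer--Vietoris analysis of the connected sum, then iterate Proposition \ref{prop:3} to get the module statements in (\ref{thm:1.2}), and finally reduce the product computations in (\ref{thm:1.2.2}) to a local intersection-theoretic calculation in the bubbled space.

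For (\ref{thm:1.1}), the connected-sum construction removes the preimage of a half-plane from the domain of each of $f$ and $f_1$ and glues them on the boundary; correspondingly, $W_{f^{\prime}}$ is obtained from $W_f$ and $W_{f_1}$ by excising a collared half-disc from each Reeb space and gluing along the common $(n-1)$-disc. A Mayer--Vietoris argument with $R$-coefficients on this decomposition yields $H^i(W_{f^{\prime}}; R) \cong H^i(W_f; R) \oplus H^i(W_{f_1}; R)$ for $i > 0$, and the product decomposition required by (\ref{thm:1.1.1}) follows because cocycles of either summand can be chosen supported in the corresponding half. Setting $A := H^{\ast}(W_{f_1}; R)$, an iteration of Proposition \ref{prop:3} along the bubbling steps building $f_1$ from $f_0$ decomposes $A$ as a module into a free part isomorphic (as an algebra) to $H^{\ast}(W_{f_0}; R) = A_0$ together with the finitely generated summands coming from each generating polyhedron; since the canonical inclusion $W_{f_0} \hookrightarrow W_{f_1}$ coming from Corollary \ref{cor:1} induces a ring map, $A_0$ sits as a subalgebra of $A$, giving (\ref{thm:1.1.2}) and (\ref{thm:1.1.3}).

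For (\ref{thm:1.2.1}), one applies Proposition \ref{prop:3} once per M(S)-bubbling step and assembles the summands. The cohomological analogue of ${\rm bub}$ and the duality between the homology and cohomology bubbled elements for UFGs rest on the explicit structure of the bubbled space $B((f^{\prime}, f^{\prime \prime}), S_j)$ from Definition \ref{def:5}: the section corresponding to $S_j$ meets each fiber $S^{n - \dim S_j}$ transversely in one point, so Poincar\'{e}--Lefschetz duality on each component sphere bundle exchanges base-direction and fiber-direction classes exactly as required. The splitting monomorphism ${i_{(f^{\prime}, f^{\prime \prime})}}^{\ast^{\prime}}$ is obtained from the obvious collapse of $W_{f^{\prime \prime}}$ onto $W_{f^{\prime}}$ that contracts each bubbled space to the corresponding regular neighborhood of $S_j$.

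The main obstacle is the product structure (\ref{thm:1.2.2}). The vanishing assertions (\ref{thm:1.2.2.2}) and (\ref{thm:1.2.2.3.3}) are handled by representing the relevant classes by cocycles supported in disjoint open subsets: for different bubbles the bubbled spaces are disjoint, and the degree hypotheses $i_1, i_2 \geq n/2$ together with $2 \dim S_j \leq n$ force the support-intersection into dimensions where the cocycles must vanish; for $i \neq 0, \dim S - k$ the complementary-degree classes in ${i_{(f^{\prime},f^{\prime \prime})}}^{\ast^{\prime}}(H^{\ast}(W_{f^{\prime}}; R))$ can be represented away from a neighborhood of the bubble. The crucial nonvanishing identities (\ref{thm:1.2.2.3.1}) and (\ref{thm:1.2.2.3.2}) are established by an explicit computation in the local model: since $f_0$ has almost-trivial monodromies and the generating polyhedra for $f_1$ are of dimension smaller than $n-1$, the spheres $e_{j^{\prime}}$ and the generating manifold $S$ can be arranged so that the bubbled space is (up to boundary connected sum) $S^{n - \dim S} \times S$, and the cup product of ${e_{j^{\prime}}}^{\ast}$ with the fiber class $[S^{n-\dim S}]^{\ast}$ computes the geometric intersection number $a_{j^{\prime}, 0}$ of $e_{j^{\prime}}$ and $S$ times the Poincar\'{e} dual of $c_S$ in $S$. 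Multiplying further by ${c_S}^{\ast}$ and applying Poincar\'{e} duality in $S$ yields (\ref{thm:1.2.2.3.2}), and (\ref{thm:1.2.2.3.0}) follows from the defining property of $\bar{Q}_{\dim S - k, W_{f_0}, R}$. The subtlety here is tracking naturality of the intersection pairing under the chain of inclusions $W_{f_0} \hookrightarrow W_{f_1} \hookrightarrow W_{f^{\prime}} \hookrightarrow W_{f^{\prime \prime}}$ and correctly identifying the Poincar\'{e} duals on $S$ with their images under the bubbled-element homomorphism of Definition \ref{def:4}.
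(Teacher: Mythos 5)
Your proposal follows essentially the same route as the paper: part (\ref{thm:1.1}) from the connected-sum decomposition, part (\ref{thm:1.2.1}) by iterating Proposition \ref{prop:3} and its cohomological analogue on the bubbled space $B((f^{\prime},f^{\prime\prime}),S_j)$, the vanishing statements by disjoint-support and degree arguments, and the nonvanishing products (\ref{thm:1.2.2.3.1})--(\ref{thm:1.2.2.3.2}) by positioning the generating manifold $S$ near spheres representing $\sum_{j^{\prime}} a_{j^{\prime}}e_{j^{\prime}}$ and evaluating. Your use of a collapse of $W_{f^{\prime\prime}}$ onto $W_{f^{\prime}}$ to define ${i_{(f^{\prime},f^{\prime\prime})}}^{{\ast}^{\prime}}$ is a clean variant of the paper's cocycle-extension construction and makes (\ref{thm:1.2.2.1}) immediate.

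One step is written backwards and would fail as stated: to justify (\ref{thm:1.1.3}) you say that ``since the canonical inclusion $W_{f_0}\hookrightarrow W_{f_1}$ induces a ring map, $A_0$ sits as a subalgebra of $A$.'' The ring map induced by that inclusion is the restriction $H^{\ast}(W_{f_1};R)\rightarrow H^{\ast}(W_{f_0};R)$, i.e.\ it goes from $A$ onto $A_0$, and a surjection of rings does not exhibit its target as a subalgebra of its source; you need a multiplicative \emph{section}. The paper supplies this by triangulating so that $W_{f_0}$ is a subcomplex of $W_{f_1}$ and extending cocycles by zero on simplices outside $W_{f_0}$, checking this is a ring monomorphism; alternatively, the same collapse-map idea you use later for ${i_{(f^{\prime},f^{\prime\prime})}}^{{\ast}^{\prime}}$ would work here. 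With that repair (and with the observation that your ``intersection number of $e_{j^{\prime}}$ and $S$'' should really be the evaluation of ${e_{j^{\prime}}}^{\ast}$ on the image of ${\rm PD}({c_S}^{\ast})$ in $H_{\dim S-k}(W_{f_0};R)$, since $e_{j^{\prime}}$ and $S$ are not of complementary dimensions), your argument matches the paper's.
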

\begin{proof}
The first statement (\ref{thm:1.1}) is almost straightforward by the assumption on the given special generic map, the definition of a connected sum of stable fold maps and the statement and the discussion of the proof of Proposition \ref{prop:3}. We need to explain about the third property. $A_0$ is free and realized as and identified with the cohomology ring of the Reeb space of the special generic map $f_0$ whose coefficient ring is $R$. After the finite iteration of bubbling operations, the original Reeb space is regarded as a subspace of the new Reeb space $W_{f_1}$ by Corollary \ref{cor:1} or as in the proof of Proposition \ref{prop:3}. We define a new cocycle of the new Reeb space $W_{f_1}$ corresponding to an $i$-cocycle ($i>0$) of the original Reeb space $W_{f_0}$ so that the following properties hold: we consider natural triangulations of the Reeb spaces so that $W_{f_0}$ is a subcomplex of $W_{f_1}$. 
\begin{enumerate}
\item At each $i$-chain in the new space $W_{f_1}$, such that no simplex at which the coefficient of the chain is not zero is in the original Reeb space $W_{f_0} \subset W_{f_1}$, the value is $0$.
\item At each $i$-chain regarded as a chain in the original Reeb space by the inclusion, the value is same as the value of the original cocycle at the same chain.
\end{enumerate} 
This canonically gives a monomorphism of $A_0$ into $A$, which is realized as the cohomology ring of $W_{f_1}$ whose coefficient ring is $R$.

We show the statement (\ref{thm:1.2}). 
We consider the case $l=1$ (set $S:=S_1$, $F:=F_1$ and $k:=k_1$).

The generating manifold, which is diffeomorphic to $S$ and we will denote by $S$ samely, of the M(S)-bubbling operation is chosen in the interior of the Reeb space of $W_{f_0}$ and sufficiently close to the boundary of this. This will be explained again with a presentation of the reason in discussing the proof of the second property (\ref{thm:1.2.2}).
The first property （\ref{thm:1.2.1}） is shown by discussions similar to the proof of Proposition \ref{prop:3} (also for the cohomology rings). 
However, we give descriptions via bubbled elements represented by homology classes in Definition \ref{def:4}, {\it bubbled elements} represented by cohomology classes and bubbled spaces in Definition \ref{def:5}.
For a cohomology class $c \in H^{j-(n-\dim S)}(S;R)$, we consider the dual of the class represented by a fiber of the bundle $B((f^{\prime},f^{\prime \prime}),S)$ and the product. Note that $B((f^{\prime},f^{\prime \prime}),S)$ is the total space of an $S^{n-{\dim S}}$-bundle. We consider the product of the two classes and obtain a class in $H^{j}(B((f^{\prime},f^{\prime \prime}),S);R)$ ($c$ is regarded as an element in $H^{j-(n-\dim S)}(B((f^{\prime},f^{\prime \prime}),S);R)$ by regarding this as a canonically obtained class which vanishes at any chain represented as a chain in the fibers of $B((f^{\prime},f^{\prime \prime}),S)$). We can define the {\it bubbled element} ${\rm bub}_{(f^{\prime},f^{\prime \prime}),S}(c) \in H^{j}(W_{f^{\prime \prime}};R)$ by extending this class as the class such that
 at each $j$-chain containing no $j$-simplex at which the coefficient is not zero in $B((f^{\prime},f^{\prime \prime}),S)$, the value is $0$. 

For the inclusion $i_{(f^{\prime},f^{\prime \prime}),S}:W_{f^{\prime}} \rightarrow W_{f^{\prime \prime}}$, we can define a monomorphism ${i_{(f^{\prime},f^{\prime \prime}),S}}^{{\ast}^{\prime}}$ from the cohomology ring $H^{\ast}(W_{f^{\prime}};R)$ into the cohomology ring $H^{\ast}(W_{f^{\prime \prime}};R)$ similarly to the monomorphism from $A_0$ into $A$ (from $H^{\ast}(W_{f_0};R)$ into $H^{\ast}(W_{f_1};R)$) before.

Thus we can define a homomorphism ${\phi}_{(f^{\prime},f^{\prime \prime})}:H^{i}(W_{f^{\prime}};R) \oplus {\oplus}_{j=1}^{l} H^{i-(n-\dim S_j)}(S_j;R) \rightarrow H^{i}(W_{f^{\prime \prime}};R)$ by ${\phi}_{(f^{\prime},f^{\prime \prime}),S}(c_1 \oplus c_2)={i_{(f^{\prime},f^{\prime \prime}),S}}^{{\ast}^{\prime}}(c_1)+{\rm bub}_{(f^{\prime},f^{\prime \prime}),S}(c_2)$
 for $c_1 \in H^{i}(W_{f^{\prime}};R)$ and $c_2 \in H^{i-(n-\dim S)}(S;R)$. By the observation as in the proof of Proposition \ref{prop:3}, this is also an isomorphism. We have remaining statements in the first property （\ref{thm:1.2.1}） immediately.

We discuss the second property (\ref{thm:1.2.2}). The first case (\ref{thm:1.2.2.1}) is shown by a discussion similar to one in the proof of (\ref{thm:1.1}) or (\ref{thm:1.1.3}). $n-\dim S \geq \frac{n}{2}$ holds by the assumption and this or the definition of a bubbled element yields the second case
 (\ref{thm:1.2.2.2}): the product of two bubbled elements of degree larger or equal to $\frac{n}{2}$ is zero.
For the third case (\ref{thm:1.2.2.3}), first we choose the generating manifold $S_{\rm S}$ diffeomorphic to ($\dim S-k$)-dimensional standard sphere in ${\rm Int} W_{f^{\prime}}$ representing the class ${\Sigma}_{j^{\prime}=1}^{b} a_{j^{\prime}} e_{j^{\prime}}$ sufficiently close to the boundary of the original Reeb space $W_{f_0} \subset W_{f_1}$ of the special generic map. We can do this since the dimensions of generating polyhedra for the construction of $f_1$ from $f_0$, which are considered before constructing the connected sum of $f$ and $f_1$, are assumed to be smaller than $n-1<n$ and two relations $n \geq 3$ and $2{\dim} S \leq n$ on dimensions are assumed. The definition of $Q_{\dim S-k,W_{f_0},R} \subset A_0$ is also essential.

The assumptions that $c_S$ in the manifold $S$ is represented by a closed submanifold $F$ whose normal bundle is trivial and that $\dim S \leq \frac{n}{2}$ enable us to obtain an embedding into a small open tubular neighborhood of $S_{\rm S}$, which is regarded as the interior of the total space of a suitable trivial linear bundle, so that the composition of the embedding and the canonical projection to $S_{\rm S}$ is a smooth map and that for the resulting smooth map there exists a preimage of a regular value regarded as $F \subset S$. Now we can demonstrate calculations.

We consider the first case (\ref{thm:1.2.2.3.0}) and the second case (\ref{thm:1.2.2.3.1}). We consider the product of ${i_{(f^{\prime},f^{\prime \prime}),S}}^{{\ast}^{\prime}}({e_{j^{\prime}}}^{\ast})$ and a fixed generator of $H^{(n-\dim S)-(n-\dim S)}(S;R)$, isomorphic to $R$, which is regarded as a class of degree $n-\dim S$ in the whole cohomology ring $H^{\ast}(W_{f^{\prime \prime}};R)$ by considering the bubbled element.

Note also that ${\rm PD}({c_S}^{\ast})$ is regarded as ${\Sigma}_{j^{\prime}=1}^{b} a_{j^{\prime}} e_{j^{\prime}}$ where we consider $S$ as a manifold smoothly embedded in the open tubular neighborhood of $S_{\rm S}$.
We can calculate the value at the tensor product of a cycle representing ${\Sigma}_{j^{\prime}=1}^{b} a_{j^{\prime}} {i_{(f^{\prime},f^{\prime \prime}),S}}_{\ast}(e_{j^{\prime}}) \in {i_{(f^{\prime},f^{\prime \prime}),S}}_{\ast}(H_{\dim S}(W_{f^{\prime}};R))$ (where we consider the monomorphism from a homology group to another homology group induced by the inclusion $i_{(f^{\prime},f^{\prime \prime}),S)}$) and
a cycle representing the bubbled element of a fixed generator of $H_{(n-\dim S)-(n-\dim S)}(S;R)$ and the value is $a_{j^{\prime}}$ (we consider a canonically obtained class of the product space $W_{f^{\prime \prime}} \times W_{f^{\prime \prime}}$: the product is obtained by composing the pull-back via the diagonal map). This completes the calculation for the second case. We can also see that the product of the first case vanishes.

We can calculate the product of the third case (\ref{thm:1.2.2.3.2}) similarly. We can calculate the value at the tensor product of a cycle representing ${\Sigma}_{j^{\prime}=1}^{b} a_{j^{\prime}} {i_{(f^{\prime},f^{\prime \prime}),S}}_{\ast}(e_{j^{\prime}})$ and a cycle representing the bubbled element
 of the class $c_S$ in $S$(, represented by $F$,) before and we have $a_{j^{\prime}}$ (as before we consider a canonically obtained cocycle of the product space $W_{f^{\prime \prime}} \times W_{f^{\prime \prime}}$: the product is obtained by composing the pull-back via the diagonal map). Note also that $c_S$ is a UFG.

We can prove the fourth case (\ref{thm:1.2.2.3.3}) by observing the topologies of the Reeb spaces in this situation easily.

We can show for $l>1$ similarly: rigorous proofs are left to readers.
\end{proof}

\begin{Thm}
\label{thm:2}
In the situation of Theorem \ref{thm:1} {\rm (}\ref{thm:1.2}{\rm )}, we also assume the following conditions.
\begin{enumerate}
\item There exists a UFG $c_0 \in A_0$ of degree $k^{\prime}>0$ which is the dual of a UFG in $Q_{\dim S-k,W_{f_0},R} \subset H_{\dim S-k}(W_{f_0};R)$ and identified with an element of $A_0$ via the isomorphism for the identification of $H^{\ast}(W_{f_0};R)$ with $A_0$ and that $2k^{\prime} \leq n$ holds.
\item $r \in R$ represented as $r_0 \in \mathbb{Z}$ times the identity element $1 \neq 0 \in R$ and an integer $k \geq 0$ satisfying $k+k^{\prime} \leq \frac{n}{2}$ are given.
\end{enumerate}
In this situation, we can construct a fold map $f^{\prime \prime \prime}$ by a normal M{\rm (}S{\rm )}-bubbling operation to the resulting map $f^{\prime \prime}$ in Theorem \ref{thm:1} such that the following properties hold.
\begin{enumerate}
\item
\label{thm:2.1}

 $H^{\ast}(W_{f^{\prime \prime \prime}};R)$ is, as a graded module over $R$, isomorphic to a graded commutative algebra represented as the direct sum of $H^{\ast}(W_{f^{\prime \prime}};R)$ and a suitable module $A^{\prime}$ over $R$: we identify these two isomorphic modules over $R$ in a suitable way.
$H_{i}(W_{f^{\prime \prime \prime}};R)$ is isomorphic to a module over $R$ represented as the direct sum of $H_{i}(W_{f^{\prime \prime}};R)$ and a module isomorphic to the $i$-th module of $A^{\prime}$ over $R$: we identify these two isomorphic modules over $R$ in a suitable way for every $i$.
\item
\label{thm:2.2}
 $A^{\prime}$ is a graded module over $R$ such that for $i=n-k-k^{\prime},n-k^{\prime},n-k,n$, the $i$-th module is $R$ and for $i \neq n-k-k^{\prime},n-k^{\prime},n-k,n$, the $i$-th module is zero.
\item
\label{thm:2.3}
 Products of elements of $H^{\ast}(W_{f^{\prime \prime \prime}};R)$ are as the following lists show.
\begin{enumerate}
\item
\label{thm:2.3.1}
$H^{\ast}(W_{f^{\prime \prime \prime}};R)$ is identified with $H^{\ast}(W_{f^{\prime \prime}};R) \oplus A^{\prime}$ as modules over $R$ in a suitable way as before.
\item
\label{thm:2.3.2}
The product of $(c_1,0)  \in H^{\ast}(W_{f^{\prime \prime}};R) \oplus A^{\prime}$ and $(c_2,0) \in H^{\ast}(W_{f^{\prime \prime}};R) \oplus A^{\prime}$ is $(c_1c_2,0)  \in H^{\ast}(W_{f^{\prime \prime}};R) \oplus A^{\prime}$ where the cohomology ring is identified with $H^{\ast}(W_{f^{\prime \prime \prime}};R)$ as before.
\item
\label{thm:2.3.3}
The product of $(0,c_1)  \in H^{\ast}(W_{f^{\prime \prime}};R) \oplus A^{\prime}$ and $(0,c_2) \in H^{\ast}(W_{f^{\prime \prime}};R) \oplus A^{\prime}$ is $(0,0) \in H^{\ast}(W_{f^{\prime \prime}};R) \oplus A^{\prime}$ where the cohomology ring is identified with $H^{\ast}(W_{f^{\prime \prime \prime}};R)$ as before.
\item
\label{thm:2.3.4}
As a module over $R$, $H^{\ast}(W_{f^{\prime \prime}};R)$ is represented as and in a suitable way identified with the internal direct sum of the free submodule $<c_0>$ generated by the one element set $\{c_0\}$ and a suitable submodule $A^{\prime \prime}$.
\item
\label{thm:2.3.5}
 The product of $(0,c,0)  \in <c_0> \oplus A^{\prime \prime} \oplus A^{\prime}$ and $(0,0,c^{\prime}) \in <c_0> \oplus A^{\prime \prime} \oplus A^{\prime}$ is always $(0,0,0) \in <c_0> \oplus A^{\prime \prime} \oplus A^{\prime}$ where the cohomology ring is identified with $H^{\ast}(W_{f^{\prime \prime}};R) \oplus A^{\prime}$ and $H^{\ast}(W_{f^{\prime \prime \prime}};R)$ as before.
\item 
\label{thm:2.3.6}
The product of $(c_0,0,0)  \in <c_0> \oplus A^{\prime \prime} \oplus A^{\prime}$ in the $k^{\prime}$-th module and $(0,0,1) \in <c_0> \oplus A^{\prime \prime} \oplus A^{\prime}$ in the {\rm (}$n-k-k^{\prime}${\rm )}-th module is $(0,0,r) \in <c_0> \oplus A^{\prime \prime} \oplus A^{\prime}$ in the {\rm (}$n-k${\rm )}-th module where the cohomology ring is identified with $H^{\ast}(W_{f^{\prime \prime}};R) \oplus A^{\prime}$ and $H^{\ast}(W_{f^{\prime \prime \prime}};R)$ as before and where $1$ and $r$ denotes the elements of $R$, isomorphic to the module.
\item 
\label{thm:2.3.7}
The product of $(c_0,0,0)  \in <c_0> \oplus A^{\prime \prime} \oplus A^{\prime}$ in the $k^{\prime}$-th module and $(0,0,1) \in <c_0> \oplus A^{\prime \prime} \oplus A^{\prime}$ in the {\rm (}$n-k^{\prime}${\rm )}-th module is $(0,0,r) \in <c_0> \oplus A^{\prime \prime} \oplus A^{\prime}$ in the $n$-th module where the cohomology ring is identified with $H^{\ast}(W_{f^{\prime \prime}};R) \oplus A^{\prime}$ and $H^{\ast}(W_{f^{\prime \prime \prime}};R)$ as before and where $1$ and $r$ denote the elements of $R$, isomorphic to the module.
\end{enumerate}
\end{enumerate}
\end{Thm}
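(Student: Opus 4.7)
The plan is to perform one additional normal $M$($S$)-bubbling operation on $f^{\prime \prime}$ with a carefully placed generating manifold, and then compute the resulting cohomology ring along the lines of the proof of Theorem \ref{thm:1} (\ref{thm:1.2}). Concretely, I would take the generating manifold to be $S^{\prime} := S^{k} \times S^{k^{\prime}}$, a CPS manifold of dimension $k+k^{\prime} \leq n/2$; by Proposition \ref{prop:5} and the dimension hypothesis, it embeds smoothly into $\mathbb{R}^{n}$ with trivial normal bundle, hence into a small open disc in the interior of $W_{f_0} \subset W_{f^{\prime \prime}}$. Its cohomology is free with generators $1, \alpha, \beta, \alpha\beta$ in degrees $0, k, k^{\prime}, k+k^{\prime}$. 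Following the construction in the proof of Theorem \ref{thm:1} (\ref{thm:1.2.2.3}), I first choose an auxiliary standard $k^{\prime}$-sphere $S^{\prime}_{\mathrm{S}}$ in a collar neighborhood of $\partial W_{f_0}$ representing $r \cdot q_0 \in H_{k^{\prime}}(W_{f_0}; R)$, where $q_0$ is the UFG whose dual is $c_0$ (such a sphere exists by an $r_0$-fold connected sum of the standard sphere representing $q_0$, using $n \geq 3$). I then embed $S^{\prime}$ into a trivialised tubular neighborhood of $S^{\prime}_{\mathrm{S}}$ so that the canonical projection to $S^{\prime}_{\mathrm{S}}$ has $S^{k}$-fibre and a regular-value preimage is a copy of $S^{k} \subset S^{\prime}$, putting the triple $(S^{\prime}, S^{k}, \beta)$ in the role of $(S, F, c_S)$ of Theorem \ref{thm:1}.

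The additive statement proceeds by applying Proposition \ref{prop:3} together with the K\"unneth decomposition for $S^{k} \times S^{k^{\prime}}$ and its cohomological counterpart (which applies because the relevant modules are free of finite rank). The shift by $n - k - k^{\prime}$ produces a direct summand $A^{\prime}$ concentrated precisely in degrees $n-k-k^{\prime}, n-k^{\prime}, n-k, n$, each isomorphic to $R$. This establishes (\ref{thm:2.1}) and (\ref{thm:2.2}).

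For the multiplicative structure, (\ref{thm:2.3.2}) is immediate from the fact that $i_{(f^{\prime \prime}, f^{\prime \prime \prime})}^{{\ast}^{\prime}}$ is an algebra monomorphism, exactly as in Theorem \ref{thm:1} (\ref{thm:1.2.2.1}). Property (\ref{thm:2.3.3}) is a degree count: each bubbled element has cohomological degree $\geq n - k - k^{\prime} \geq n/2$, so the product of two lies in degree $\geq n$, strictly greater than $n$ in every case except when both factors have minimal degree $n - k - k^{\prime}$; in that borderline case a fibrewise push-off within the bubbled bundle $B((f^{\prime \prime}, f^{\prime \prime \prime}), S^{\prime})$ gives zero cup product, as in the borderline argument of Theorem \ref{thm:1} (\ref{thm:1.2.2.2}). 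Property (\ref{thm:2.3.5}) follows because the classes in $A^{\prime \prime}$ admit cocycle representatives supported away from $S^{\prime}$, exactly as in Theorem \ref{thm:1} (\ref{thm:1.2.2.3.0}). The non-trivial relations (\ref{thm:2.3.6}) and (\ref{thm:2.3.7}) are computed by the same diagonal-pullback evaluation as in Theorem \ref{thm:1} (\ref{thm:1.2.2.3.1}) and (\ref{thm:1.2.2.3.2}): evaluating $c_0 \cup (\text{bubbled class})$ on the tensor of a cycle representing $r \cdot q_0 \simeq S^{\prime}_{\mathrm{S}}$ and a cycle representing the appropriate bubbled fibre of $B((f^{\prime \prime}, f^{\prime \prime \prime}), S^{\prime})$ yields $r = r_0 \cdot 1$, so the product equals $r$ times the bubbled element of $\beta$ in degree $n-k$ for (\ref{thm:2.3.6}) and $r$ times the bubbled element of $\alpha\beta$ in degree $n$ for (\ref{thm:2.3.7}).

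The principal difficulty lies in the geometric realisation: embedding $S^{\prime}$ in $W_{f^{\prime \prime}}$ so that its $S^{k^{\prime}}$-factor carries exactly $r \cdot q_0$ and the whole of $S^{\prime}$ fibres over $S^{\prime}_{\mathrm{S}}$ with the prescribed $S^{k}$-fibre, and then verifying that the diagonal-pullback calculation delivers the integer multiplicity $r_0$ faithfully. The hypotheses $n \geq 3$, $2k^{\prime} \leq n$, $k+k^{\prime} \leq n/2$, together with the almost-trivial monodromies of $f_0$ and the bound (less than $n-1$) on the dimensions of the generating polyhedra used to build $f_1$ from $f_0$, are precisely what give enough ambient room near $\partial W_{f_0}$ to place $S^{\prime}_{\mathrm{S}}$ and its tubular thickening $S^{\prime}$, and to push cycles apart for the vanishing-product arguments, mirroring the corresponding step in the proof of Theorem \ref{thm:1}.
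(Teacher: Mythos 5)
Your proposal is correct and follows essentially the same route as the paper: the author likewise takes the extra generating manifold to be $S^{k^{\prime}} \times S^{k}$ with $c_{S_{l+1}}$ represented by $\{\ast\} \times S^{k}$, places it in ${\rm Int}\, W_{f_0}$ near $\partial W_{f_0}$ so that the $S^{k^{\prime}}$-factor represents $r$ times the homology class dual to $c_0$, and then reduces everything to Proposition \ref{prop:3} and the additive and multiplicative computations in the proof of Theorem \ref{thm:1} (\ref{thm:1.2}). Your write-up is in fact somewhat more explicit than the paper's (e.g.\ the K\"unneth bookkeeping and the $r_0$-fold connected-sum realisation of the auxiliary sphere), but the underlying argument is the same.
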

\begin{proof}
We can show this by in the situation including the proof of Theorem \ref{thm:1}, adding one more submanifold $S_{l+1}:=S^{k^{\prime}} \times S^k$: the class $c_{S_{l+1}}$ is represented by a manifold $\{\ast\} \times S^k \subset S_{l+1}$. $A_0$ is realized as the cohomology ring of the Reeb space of a suitable special
 generic map into ${\mathbb{R}}^n$ such that the restriction to the singular set is an embedding and these graded commutative algebras are identified in a suitable way. The class identified with $c_0$ by this identification is realized by the dual of the class ${c_0}^{\ast}$, represented by a standard sphere of dimension $k^{\prime}$ smoothly embedded in ${\rm Int} W_{f_0}$ and located close to the boundary of $W_{f_0}$. The generating manifold of the ($l+1$)-th normal M(S)-bubbling operation, diffeomorphic to $S_{l+1}$, can be taken in ${\rm Int} W_{f_0}$ and located sufficiently close to the boundary of $W_{f_0}$ so that $S^{k^{\prime}} \times \{{\ast}^{\prime}\} \subset S_{l+1}$ represents the class $r {c_0}^{\ast}=r_0 {c_0}^{\ast}$ as the proof of Theorem \ref{thm:1}.

Investigating the proof of Theorem \ref{thm:1} again, we can check that this completes the proof.
The first two statements are obtained mainly by (the proof of) Theorem \ref{thm:1} (\ref{thm:1.2.1}) and by Proposition \ref{prop:3} where the generating manifold is diffeomorphic to $S^{l+1}$. The third statement on the products are obtained mainly by (the proof of) Theorem \ref{thm:1} (\ref{thm:1.2.2}). For example, the last three lists are owing to (the proof of) (\ref{thm:1.2.2.3}).
\end{proof}

\begin{Ex}
\label{ex:5}
In \cite{kitazawa6}, we studied maps obtained by finite iterations of M(S)-bubbling operations whose generating polyhedra are bouquets of spheres starting from standard GCPS special generic maps from $m$-dimensional manifolds into ${\mathbb{R}}^n$ satisfying $m>n$. 

For example, we consider a suitable example for $n=6$. Let the image of the given standard GCPS special generic map $f^{\prime}$ be diffeomorphic to $S^2 \times D^4$: we assume that $f^{\prime}$ is $f_0$ and we also assume that $f$ is a canonical projection of an unit sphere and that we can regard $f^{\prime \prime}=f^{\prime}=f_1=f_0$ and $A_0=A$ for example in Theorem \ref{thm:1}. In other words, we perform no bubbling operation to $f_0$ to obtain $f_1$, and after that $f^{\prime}$ and $f^{\prime \prime}$, $l=0$ and the connected sum is regarded as an operation preserving the type of the map modulo {\it $C^{\infty}$ equivalence} (this is a fundamental notion in the singularity theory of differentiable maps and forms of fold maps at singular points in the introduction and so on can be represented via the notion: \cite{golubitskyguillemin} explains about the notion well for example). We consider a case where the generating manifold of a normal M(S)-bubbling operation to obtain a new map $f^{\prime \prime \prime}$ from $f^{\prime \prime}$ is diffeomorphic to $S^2 \times S^1$. We set $(k,k^{\prime})=(1,2)$ to apply Theorem \ref{thm:2}.  

The resulting homology group $H_i(W_{f^{\prime \prime \prime}};\mathbb{Z})$ is isomorphic to $\mathbb{Z}$ for $i=0,2,3,4,5,6$ and zero for $i \neq 2,3,4,5,6$. 

Observe the product of a suitable pair of a UFG class in $H^2(W_{f^{\prime \prime}};\mathbb{Z})$ and one in $H^3(W_{f^{\prime \prime}};\mathbb{Z})$. It may not be zero or a UFG.
The resulting Reeb space is also simply-connected.  

Consider another standard GCPS special generic map $f_0$ and pose
 the same assumption as before on the maps $f^{\prime \prime}=f^{\prime}=f_1=f_0$ where we abuse the notation before. We cannot obtain a fold map $f^{\prime \prime \prime \prime}$ such that the cohomology ring $H^{\ast}(W_{f^{\prime \prime \prime \prime}};R)$ is isomorphic to the cohomology ring $H^{\ast}(W_{f^{\prime \prime \prime}};R)$ before by any finite iteration of normal M(S)-bubbling operations whose generating manifolds are standard spheres or points starting from $f^{\prime \prime}$ here.

\end{Ex}

We can generalize this case.

\begin{Thm}
\label{thm:3}
In the situation of Theorem \ref{thm:2} together with Theorem \ref{thm:1}, we assume that $f$ is a canonical projection of an unit sphere, that $f_1=f_0$ holds. Moreover, we assume the following conditions.
\begin{enumerate}
\item In the situation of Theorem \ref{thm:1}, for any pair of elements $c_1,c_2 \in A_0$ which are UFGs such that the sum
 of the degrees are smaller than $n$, the product is also a UFG unless it vanishes.
\item In the situation of Theorem \ref{thm:1} (\ref{thm:1.2}), we also assume that $S_j$ is a standard sphere or a point for any $1 \leq j \leq l$ and for any $1 \leq j \leq l$ such that $n-\dim S_j>0$, the ($n-\dim S_j$)-th module of $A_0$ is zero.
\item In the situation of Theorem \ref{thm:2}, we assume $k>0$.
\end{enumerate}

In this situation, by any finite iteration of M{\rm (}S{\rm )}-bubbling operations such that the generating polyhedra are bouquets of standard spheres or points and that for the dimension $d$ of each of the ingredients of the generating polyhedra here satisfying $d>0$ the ($n-d$)-th module of $A_0$ is zero starting from the resulting map $f^{\prime \prime}$ in Theorems \ref{thm:1} and \ref{thm:2}, we cannot obtain a fold map $f^{\prime \prime \prime \prime}$ such that the cohomology ring $H^{\ast}(W_{f^{\prime \prime \prime \prime}};R)$ is isomorphic to the cohomology ring $H^{\ast}(W_{f^{\prime \prime \prime}};R)$ if $|r_0|>1$ is assumed and in the module $R$ the identity element $1$ is not an element of a finite order in Theorem \ref{thm:2}.
\end{Thm}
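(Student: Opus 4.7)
The plan is to argue by contradiction, isolating a specific cup product relation in $H^{\ast}(W_{f^{\prime \prime \prime}};R)$ that no ring built on top of $H^{\ast}(W_{f^{\prime \prime}};R)$ by spherical/point M(S)-bubbling can realize. Suppose some $f^{\prime \prime \prime \prime}$ as in the statement exists. First I would fix UFGs $\gamma,\beta,\delta$ in $H^{\ast}(W_{f^{\prime \prime \prime}};R)$ of degrees $k^{\prime}$, $n-k-k^{\prime}$, $n-k$ respectively, with $\gamma\smile\beta=r\cdot\delta$ as supplied by Theorem \ref{thm:2}(\ref{thm:2.3.6}), where $r=r_{0}\cdot 1\in R$. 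The hypotheses $|r_{0}|>1$ and that $1\in R$ has infinite order force $r$ to be a nonzero non-unit of $R$, so $r\cdot\delta$ is nonzero and is not a UFG. Transporting through the postulated ring isomorphism, there must exist UFGs $\tilde{\gamma},\tilde{\beta},\tilde{\delta}$ of the same degrees in $H^{\ast}(W_{f^{\prime \prime \prime \prime}};R)$ satisfying $\tilde{\gamma}\smile\tilde{\beta}=r\cdot\tilde{\delta}$; the task is then to show this relation is impossible.

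Next I would analyse $H^{\ast}(W_{f^{\prime \prime \prime \prime}};R)$ via Proposition \ref{prop:3} applied inductively to each bubbling step: as a graded $R$-module it decomposes into $H^{\ast}(W_{f^{\prime \prime}};R)$ plus a direct sum of bubbled summands, where the summand of a generating sphere $S^{d}$ contributes $R$ in degrees $n-d$ and $n$ (and a point contributes $R$ in degree $n$). Using Theorem \ref{thm:1}(\ref{thm:1.2.2}) I would record three structural facts: (i) any product of two bubbled elements vanishes, combining Theorem \ref{thm:1}(\ref{thm:1.2.2.2}) for different generating manifolds (applicable because the bubbled degrees $n-d$ all lie in $[\tfrac{n}{2},n]$ under $d\leq n/2$) with the observation that the fiber top class of the relevant sphere bundle squares to zero within a single bubbled space, and Theorem \ref{thm:1}(\ref{thm:1.2.2.3.0}) for the cross-step pairings; (ii) by Theorem \ref{thm:1}(\ref{thm:1.2.2.3.3}) the product of a preexisting class with a bubbled element vanishes except when the preexisting class sits in degree $0$ or in $\dim S-\deg(c_{S})\in\{0,d\}$; (iii) the only scenario that can yield a non-unit coefficient $a_{j^{\prime},0}$ is $c_{S}=$ point class (forcing the preexisting factor to be in degree $d$) with product landing in degree $n$ by Theorem \ref{thm:1}(\ref{thm:1.2.2.3.1}).

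With these recorded I would run the case analysis on $(\tilde{\gamma},\tilde{\beta})$. The constraints $k>0$ and $k+k^{\prime}\leq n/2$ give $k^{\prime}<n/2$, and under $d\leq n/2$ no bubbled class of any step lies in degree $k^{\prime}$, so $\tilde{\gamma}\in H^{k^{\prime}}(W_{f^{\prime \prime}};R)$ entirely. Splitting $\tilde{\beta}=\tilde{\beta}_{1}+\tilde{\beta}_{2}$ with $\tilde{\beta}_{1}\in H^{n-k-k^{\prime}}(W_{f^{\prime \prime}};R)$ and $\tilde{\beta}_{2}$ in the bubbled part coming from $f^{\prime \prime}\to f^{\prime \prime \prime \prime}$, fact (ii) together with $k^{\prime}\notin\{0,k+k^{\prime}\}$ (again using $k>0$) shows that $\tilde{\gamma}\smile\tilde{\beta}_{2}$ either vanishes outright or, by fact (iii), lands in degree $n\neq n-k$; in either case its degree-$(n-k)$ component is zero. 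Hence the degree-$(n-k)$ part of $\tilde{\gamma}\smile\tilde{\beta}$ equals $\tilde{\gamma}\smile\tilde{\beta}_{1}$, a product inside $H^{\ast}(W_{f^{\prime \prime}};R)$. Because the assumption $f_{1}=f_{0}$ together with $f$ being a canonical projection forces $H^{\ast}(W_{f^{\prime}};R)=A_{0}$ as rings, and because Theorem \ref{thm:3} restricts the $S_{j}$'s producing $f^{\prime \prime}$ to standard spheres/points subject to the same dimension condition, the same structural analysis applies inside $H^{\ast}(W_{f^{\prime \prime}};R)$: by the Theorem \ref{thm:3} hypothesis UFG products in $A_{0}$ are UFGs or zero, and by fact (iii) every non-unit-coefficient product involving a bubbled factor lands in degree $n$, never in degree $n-k<n$. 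Consequently $\tilde{\gamma}\smile\tilde{\beta}_{1}$ is either zero or a unit multiple of a UFG in $H^{n-k}(W_{f^{\prime \prime}};R)$, which contradicts $\tilde{\gamma}\smile\tilde{\beta}=r\cdot\tilde{\delta}\neq 0$ with $r$ a non-unit.

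The main obstacle I anticipate is the recursive step: tracking the possible origin of $\tilde{\beta}_{1}$ through every earlier bubbling that built $f^{\prime \prime}$ from $f^{\prime}$, and verifying at each stage that the cross-step pairings between old bubbled classes and new bubbled classes really do vanish (which is where Theorem \ref{thm:1}(\ref{thm:1.2.2.3.0}) has to be invoked). The numerical heart of the argument is the rigid degree-matching constraint $\dim S-\deg(c_{S})\in\{0,d\}$ imposed by standard spheres $S=S^{d}$, which together with $k>0$ forbids the degree configuration $(k^{\prime},n-k-k^{\prime})\mapsto n-k$ required to produce a non-unit-coefficient product; but making this rigorous requires a clean simultaneous induction over the bubbling steps and a careful application of Theorem \ref{thm:1}(\ref{thm:1.2.2.3}) at each stage.
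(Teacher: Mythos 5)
Your proposal follows essentially the same route as the paper: the paper's (very terse) proof likewise reduces to showing that in $H^{\ast}(W_{f^{\prime \prime \prime \prime}};R)$ every product of two UFGs whose degrees sum to less than $n$ is zero or again a UFG, which the relation of Theorem \ref{thm:2} (\ref{thm:2.3.6}) in degree $n-k<n$ violates, and your case analysis via Proposition \ref{prop:3} and Theorem \ref{thm:1} (\ref{thm:1.2.2}) is a legitimate unpacking of the degree bookkeeping the paper leaves implicit. The one caveat (shared with the paper) is that the step ``$|r_0|>1$ and $1$ of infinite additive order force $r$ to be a non-unit'' actually needs $r_0\cdot 1$ to be a non-unit of the PID $R$, which can fail, e.g.\ for $R=\mathbb{Q}$.
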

\begin{proof}
By the construction, considering normal M(S)-bubbling operations whose generating manifolds are standard spheres satisfying the extra condition on the dimensions or points together with Proposition \ref{prop:3}, the statement of the last part of Theorem \ref{thm:1} (\ref{thm:1.2.1}), the last part of the proof of Theorem \ref{thm:1} or (\ref{thm:1.2.2.3.3}), and so on, we can see that
 for any pair of elements $c_1,c_2 \in H^{\ast}(W_{f^{\prime \prime \prime \prime}};R)$ which are UFGs such that the sum of the degrees are smaller than $n$, the product is also a UFG unless it vanishes (see also \cite{kitazawa6}). This completes the proof.
\end{proof}
As Example \ref{ex:5} explicitly shows, Theorem \ref{thm:3} can give various families of fold maps which we do not obtain in \cite{kitazawa6} and (the cohomology rings of) the Reeb spaces which we do not obtain there for example.

\begin{Cor}
In the situation of Theorem \ref{thm:3}, if for the cohomology ring $H^{\ast}(W_{f^{\prime \prime \prime}};R)$ in Theorem \ref{thm:2}, the $i$-th module for any $0 \leq i \leq n-1$ is free and of rank at most $1$, $|r_0|>1$ is assumed, and in the module $R$ the identity element $1$ is not an element of a finite order, then for any finite iteration of M(S)-bubbling operations whose generating manifolds are standard spheres or points starting from the map $f^{\prime \prime}$, the cohomology ring of the resulting Reeb space cannot be isomorphic to $H^{\ast}(W_{f^{\prime \prime \prime}};R)$.
\end{Cor}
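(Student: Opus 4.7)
The plan is to reduce the corollary to Theorem \ref{thm:3} by producing, inside $H^{\ast}(W_{f^{\prime \prime \prime}};R)$, a concrete product of two UFGs whose degrees sum to less than $n$ and which is nonzero yet not itself a UFG. In the proof of Theorem \ref{thm:3} it was observed that, for any Reeb space arising from $f^{\prime \prime}$ by the class of bubbling operations considered there, the product of any two UFGs of total degree smaller than $n$ is automatically a UFG or zero. Transporting this ``UFG-product'' property across a hypothetical isomorphism $H^{\ast}(W_{f^{\prime \prime \prime \prime}};R) \cong H^{\ast}(W_{f^{\prime \prime \prime}};R)$ will therefore yield a contradiction as soon as the anomalous product is produced.

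For the anomalous product I would invoke Theorem \ref{thm:2} (\ref{thm:2.3.6}): $c_0$ lies in the $k^{\prime}$-th module and the generator $(0,0,1) \in A^{\prime}$ lies in the $(n-k-k^{\prime})$-th module, so their degrees sum to $n-k$; since $k>0$ is part of the situation of Theorem \ref{thm:3}, this sum is strictly smaller than $n$. Their product is $(0,0,r)=r_0\cdot y$, where $y$ is the generator of the $(n-k)$-th module of $A^{\prime}$. The corollary's hypothesis that every $i$-th module of $H^{\ast}(W_{f^{\prime \prime \prime}};R)$ with $0 \leq i \leq n-1$ is free of rank at most one forces each of these three modules to be isomorphic to $R$ as a module and makes $c_0$, $(0,0,1)$, and $y$ UFGs of their respective modules. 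The assumption that $1 \in R$ is of infinite additive order implies $r_0\cdot 1_R \neq 0$, so the product is nonzero; and, in the intended setting (as in Example \ref{ex:5}), the assumption $|r_0|>1$ makes $r_0\cdot 1_R$ a non-unit, so $r_0\cdot y$ fails the first UFG condition, completing the production of the obstruction.

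The principal obstacle I anticipate is the ``non-unit'' step for $r_0 \cdot 1_R$: in a general PID $R$ satisfying only the stated hypothesis that $1 \in R$ has infinite additive order, an integer with $|r_0|>1$ need not stay a non-unit, so for absolute rigor one either incorporates an explicit non-unit condition on $r_0 \cdot 1_R$ or restricts to coefficient rings such as $R = \mathbb{Z}$ in which this follows automatically. Once this point is handled, the remainder is a bookkeeping step: combine the UFG-product principle from Theorem \ref{thm:3} with the identification of $c_0$ and of $A^{\prime}$ inside $H^{\ast}(W_{f^{\prime \prime \prime}};R)$ and with the product table (\ref{thm:2.3}) of Theorem \ref{thm:2}, and then read off the contradiction.
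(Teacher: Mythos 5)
Your overall strategy coincides with the paper's intended one: the paper states this corollary without a separate proof, as an immediate consequence of Theorem \ref{thm:3}, and the argument you spell out --- exhibit the product in Theorem \ref{thm:2} (\ref{thm:2.3.6}) as a nonzero product of two UFGs of total degree $n-k<n$ which equals $r_0$ times a generator and hence is not a UFG, then transport the ``products of UFGs of total degree smaller than $n$ are UFGs or zero'' property from the proof of Theorem \ref{thm:3} across a hypothetical isomorphism --- is exactly that reduction. Your caveat that $r_0\cdot 1_R$ with $|r_0|>1$ need not be a non-unit in a general PID in which $1$ has infinite additive order (e.g. $R=\mathbb{Q}$) is a fair observation, but it points at the hypotheses of Theorem \ref{thm:3} itself rather than at your argument; for $R=\mathbb{Z}$, or any $R$ in which the nonzero integer multiples of $1$ other than $\pm 1$ are non-units, the step goes through.

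The one genuine gap is a mismatch of quantifiers. Theorem \ref{thm:3} only excludes iterations in which, for each positive dimension $d$ of an ingredient of a generating polyhedron, the $(n-d)$-th module of $A_0$ is zero, whereas the corollary quantifies over \emph{all} finite iterations whose generating manifolds are standard spheres or points, with no such dimension restriction. Your phrase ``the class of bubbling operations considered there'' therefore covers only the restricted class, and the case of a generating sphere of dimension $d$ with the $(n-d)$-th module of $A_0$ nonzero --- where the computations of Theorem \ref{thm:1} (\ref{thm:1.2.2.3.1})--(\ref{thm:1.2.2.3.2}) produce products that are integer multiples $a_{j^{\prime},0}$ of generators --- is not addressed. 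Note also that the corollary's extra hypothesis (each $i$-th module of $H^{\ast}(W_{f^{\prime\prime\prime}};R)$ free of rank at most one for $0\leq i\leq n-1$) is not actually needed where you invoke it: condition (1) in the definition of a UFG already fails for $r_0 y$ once $r_0\cdot 1_R$ is a nonzero non-unit, whatever the rank, and $c_0$, $(0,0,1)$ and $y$ are UFGs for the reasons internal to Theorems \ref{thm:1} and \ref{thm:2}. That this hypothesis plays no role in your argument is the symptom of the gap: it is presumably what is meant to constrain, via the rank counts of Proposition \ref{prop:3}, which sphere dimensions can occur in an iteration whose graded module matches $H^{\ast}(W_{f^{\prime\prime\prime}};R)$, and thereby to handle the unrestricted class. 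As written, your proof establishes the corollary only for iterations satisfying the additional dimension condition of Theorem \ref{thm:3}.
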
 

\begin{Thm}
\label{thm:4.1}
In the situation of Theorem \ref{thm:1} {\rm (}\ref{thm:1.2}{\rm )}, we also assume the following conditions.
\begin{enumerate}
\item There exists a UFG $c_0 \in A_0$ of degree $k^{\prime}>0$ which is the dual of a UFG in $Q_{\dim S-k,W_{f_0},R} \subset H_{\dim S-k}(W_{f_0};R)$ and identified with an element of $A_0$ via the isomorphism for the identification of $H^{\ast}(W_{f_0};R)$ with $A_0$ and that $2k^{\prime} \leq n$ holds.
\item $r \in R$ represented as $r_0 \in \mathbb{Z}$ times the identity element $1 \neq 0 \in R$ and $r^{\prime} \in R$ are given. 
\item $k^{\prime}$ is even and for $k:=k^{\prime}-1$, $k+k^{\prime} \leq \frac{n}{2}$ holds.
\end{enumerate}

In this situation, we can construct a fold map $f^{\prime \prime \prime}$ by a normal M{\rm (}S{\rm )}-bubbling operation to the resulting map $f^{\prime \prime}$ in Theorem \ref{thm:1} such that the following properties hold.
\begin{enumerate}
\item The module $H_{i}(W_{f^{\prime \prime \prime}};R)$ is isomorphic to a module over $R$ represented as the direct sum of $H_{i}(W_{f^{\prime \prime}};R)$ and a module $A_{{\rm h},i}$ over $R$, which we will explain later.
\item The cohomology ring $H^{\ast}(W_{f^{\prime \prime \prime}};R)$ is, as a graded module over $R$, isomorphic to the direct sum of $H^{\ast}(W_{f^{\prime \prime}};R)$ and a suitable module $A_{\rm ch}$ over $R$, which we will explain later: we identify these two isomorphic modules over $R$ in a suitable way.
\end{enumerate}

If $R:=\mathbb{Z}$, then the following facts hold. 
\begin{enumerate}
\item $A_{{\rm h},i}$ is a module over $R$. For $i=n-k-k^{\prime},n$, the module is $R$. For $i=n-k^{\prime}$, the module is $R/2|r_0|R$ and of order $2|r_0|$ if $r_0 \neq 0$ and $R$ if $r_0=0$. For $i=n-k$, the module is zero if $r_0 \neq 0$ and $R$ if $r_0=0$. For $i \neq n-k-k^{\prime},n-k^{\prime},n-k,n$, $A_{{\rm h},i}$ is zero.
\item The $i$-th module of $A_{\rm ch}$ is a graded module over $R$. For $i=n-k-k^{\prime},n$, the $i$-th module is $R$. For $i=n-k$, the $i$-th module is $R/2|r_0|R$ and of order $2|r_0|$ if $r_0 \neq 0$ and $R$ if $r_0=0$. For $i=n-k^{\prime}$, the $i$-th module is zero if $r_0 \neq 0$ and $R$ if $r_0=0$. For $i \neq n-k-k^{\prime},n-k,n-k^{\prime},n$, the $i$-th module is zero.
\item Similar facts as the five lists from {\rm (}\ref{thm:2.3.1}{\rm)} to {\rm (}\ref{thm:2.3.5}{\rm)} in Theorem \ref{thm:2} hold where $A^{\prime}:=A_{\rm ch}$ and $A^{\prime \prime}$ is same as that in the situation of Theorem \ref{thm:2}. Moreover, the following two facts hold for $r_0=0$.
\begin{enumerate}
\item
The product of $(c_0,0,0)  \in <c_0> \oplus A^{\prime \prime} \oplus A^{\prime}$ in the $k^{\prime}$-th module and $(0,0,1) \in <c_0> \oplus A^{\prime \prime} \oplus A^{\prime}$ in the {\rm (}$n-k-k^{\prime}${\rm )}-th module is $(0,0,r^{\prime}) \in <c_0> \oplus A^{\prime \prime} \oplus A^{\prime}$ in the {\rm (}$n-k${\rm )}-th module where the cohomology ring is identified with $H^{\ast}(W_{f^{\prime \prime}};R) \oplus A^{\prime}$ and $H^{\ast}(W_{f^{\prime \prime \prime}};R)$ as before and where $1$ and $r^{\prime}$ denotes the elements of $R$, isomorphic to the module.
\item 
The product of $(c_0,0,0)  \in <c_0> \oplus A^{\prime \prime} \oplus A^{\prime}$ in the $k^{\prime}$-th module and $(0,0,1) \in <c_0> \oplus A^{\prime \prime} \oplus A^{\prime}$ in the {\rm (}$n-k^{\prime}${\rm )}-th module is $(0,0,r^{\prime}) \in <c_0> \oplus A^{\prime \prime} \oplus A^{\prime}$ in the $n$-th module where the cohomology ring is identified with $H^{\ast}(W_{f^{\prime \prime}};R) \oplus A^{\prime}$ and $H^{\ast}(W_{f^{\prime \prime \prime}};R)$ as before and where $1$ and $r^{\prime}$ denote the elements of $R$, isomorphic to the module.
\end{enumerate}
Moreover, these two facts with $r^{\prime}$ being replaced by $0$ hold for $r_0 \neq 0$.
\end{enumerate}
Furthermore, for $k^{\prime}=2,4,8$, we can replace "$2|r_0|$" by "$|r_0|$".
\end{Thm}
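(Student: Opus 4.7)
The plan is to emulate the construction of Theorem \ref{thm:2}, but to take as the generating manifold of the additional normal M(S)-bubbling operation a twisted sphere bundle in place of a product of spheres; this twisting is what introduces torsion of the prescribed order into the new Reeb space.

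First I would choose $S_{l+1}$ to be the unit sphere bundle of an oriented linear rank-$k^{\prime}$ vector bundle $\xi \to S^{k^{\prime}}$ with Euler number $e$; this is a closed orientable manifold of dimension $k+k^{\prime}=2k^{\prime}-1$. For general even $k^{\prime}$, a classical consequence of Adams' Hopf invariant one theorem is that the image of the Euler-number homomorphism $\pi_{k^{\prime}-1}(SO(k^{\prime})) \to \mathbb{Z}$ is exactly $2\mathbb{Z}$ (generated by $\chi(S^{k^{\prime}})=2$ from $TS^{k^{\prime}}$), so any even Euler number, in particular $e=2r_0$, is realized. For $k^{\prime} \in \{2,4,8\}$, the parallelizability of $S^{k^{\prime}-1}$ provides Hopf-type bundles of Euler number $\pm 1$, so any integer (in particular $e=r_0$) is realized. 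The Gysin sequence of $S^{k^{\prime}-1} \to S_{l+1} \to S^{k^{\prime}}$ then gives, for $e \neq 0$, that $H_{\ast}(S_{l+1};R)$ equals $R$ in degrees $0$ and $2k^{\prime}-1$, $R/|e|R$ in degree $k^{\prime}-1$, and zero otherwise; cohomologically, $R/|e|R$ appears in degree $k^{\prime}$.

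Next I would place $S_{l+1}$ in $\mathrm{Int}\,W_{f_0} \subset W_{f^{\prime \prime}}$ sufficiently close to $\partial W_{f_0}$, exactly as in the proof of Theorem \ref{thm:1} (\ref{thm:1.2.2.3}), so that a small tubular neighborhood lies in an open region on which the fold map restricts to a trivial bundle and the normal M(S)-bubbling operation from Example \ref{ex:2} can be carried out, producing $f^{\prime \prime \prime}$. Proposition \ref{prop:3} applies since $2\dim S_{l+1}=2(2k^{\prime}-1) \leq n$ follows from the hypothesis $k+k^{\prime} \leq n/2$, and yields $H_i(W_{f^{\prime \prime \prime}};R) \cong H_i(W_{f^{\prime \prime}};R) \oplus H_{i-(n-\dim S_{l+1})}(S_{l+1};R)$. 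Shifting by $n-\dim S_{l+1}=n-2k^{\prime}+1$ and substituting the explicit $H_{\ast}(S_{l+1};R)$ gives new nonzero summands in degrees $n-k-k^{\prime}$, $n-k^{\prime}$, and $n$ of the prescribed orders, proving the homology statement; the universal coefficient theorem over the PID $R$ then yields the cohomology statement, with $|e|=2|r_0|$ in general and $|e|=|r_0|$ for $k^{\prime} \in \{2,4,8\}$.

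For the product structure I would follow the scheme of Theorem \ref{thm:1} (\ref{thm:1.2.2.3}) together with Theorem \ref{thm:2}: the zero section $S^{k^{\prime}} \subset \xi$, sitting inside a small tubular neighborhood of $S_{l+1}$ in the Reeb space, is arranged so that as a cycle in $W_{f^{\prime \prime}}$ it represents $\sum_{j^{\prime}=1}^{b} a_{j^{\prime}} e_{j^{\prime}}$ (possible by the bounds $n \geq 3$ and $2\dim S_{l+1} \leq n$ yielding enough codimension for a generic embedding), and the required products of ${i_{(f^{\prime \prime},f^{\prime \prime \prime}),S_{l+1}}}^{\ast^{\prime}}(e_{j^{\prime}}^{\ast})$ and $c_0$ with the new bubbled generators drop out of the diagonal/K\"unneth computation applied to the bubbled space $B((f^{\prime \prime},f^{\prime \prime \prime}),S_{l+1})$, which is an $S^{n-(2k^{\prime}-1)}$-bundle with a section over $S_{l+1}$. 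The hard part will be the torsion bookkeeping: unlike Theorem \ref{thm:2} where all relevant groups were free, here products must be read off inside $R/|e|R$, and the Gysin sequence of $S_{l+1}$ must be invoked carefully to confirm that multiplication by the Euler class of $\xi$ produces the order exactly $|e|$ and not a proper divisor. The cases $r_0 \neq 0$ (torsion collapses the would-be free summand in $H^{n-k^{\prime}}$ and the product lands in $H^{n-k}\cong R/|e|R$) and $r_0=0$ (everything stays free and the independent parameter $r^{\prime}$ appears in exactly one product) must be treated separately; once these are settled, items analogous to (\ref{thm:2.3.1})--(\ref{thm:2.3.5}) of Theorem \ref{thm:2} transfer formally and complete the proof.
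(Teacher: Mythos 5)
Your proposal is correct and follows essentially the same route as the paper: the paper's own proof is a two-sentence sketch that simply replaces $S_{l+1}$ in the proof of Theorem \ref{thm:2} by the total space of an oriented linear $S^{k}$-bundle over $S^{k^{\prime}}$ with Euler number $2r_0$ (resp. $r_0$ for $k^{\prime}=2,4,8$), exactly as you do. Your justification of which Euler numbers are realizable and your Gysin-sequence computation of $H_{\ast}(S_{l+1};R)$, followed by the degree shift from Proposition \ref{prop:3}, supply details the paper leaves implicit.
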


We present a sketch of the proof only. For algebraic topological theory and differential topological theory of {\it oriented} linear bundles and their {\it Euler classes} and {\it Euler numbers}, see \cite{milnorstasheff} for example.
\begin{proof}[A sketch of the proof.]
In the proof of Theorem \ref{thm:2}, we set $S_{l+1}$ as the total space of an oriented linear $S^{k}$-bundle over $S^{k^{\prime}}$ whose Euler number is $2r_0$ instead. For $k^{\prime}=2,4,8$, we set $S_{l+1}$ as the total space of an oriented linear $S^{k}$-bundle over $S^{k^{\prime}}$ whose Euler number is $r_0$.
\end{proof}

\begin{Thm}
\label{thm:4.2}
In the situation of Theorem \ref{thm:1} {\rm (}\ref{thm:1.2}{\rm )}, let $p \geq 1$ be an integer and $R:=\mathbb{Z}/2p\mathbb{Z}$. Let $r_{p} \in R$.
\begin{enumerate}
\item There exists a UFG $c_0 \in A_0$ of degree $k^{\prime}>0$ which is the dual of a UFG in $Q_{\dim S-k,W_{f_0},R} \subset H_{\dim S-k}(W_{f_0};R)$ and identified with an element of $A_0$ via the isomorphism for the identification of $H^{\ast}(W_{f_0};R)$ with $A_0$ and that $2k^{\prime} \leq n$ holds.
\item $k^{\prime}$ is even and for $k:=k^{\prime}-1$, $k+k^{\prime} \leq \frac{n}{2}$ holds.
\end{enumerate}

In this situation, we can construct a fold map $f^{\prime \prime \prime}$ by a normal M{\rm (}S{\rm )}-bubbling operation to the resulting map $f^{\prime \prime}$ in Theorem \ref{thm:1} such that the following properties hold.
\begin{enumerate}
\item The module $H_{i}(W_{f^{\prime \prime \prime}};R)$ is isomorphic to a module over $R$ represented as the direct sum of $H_{i}(W_{f^{\prime \prime}};R)$ and a module $A_{{\rm h},i}$ over $R$, which we will explain later.
\item The cohomology ring $H^{\ast}(W_{f^{\prime \prime \prime}};R)$ is, as a graded module over $R$, isomorphic to the direct sum of $H^{\ast}(W_{f^{\prime \prime}};R)$ and a suitable module $A_{\rm ch}$ over $R$, which we will explain later: we identify these two isomorphic modules over $R$ in a suitable way. 
\item For $i=n-k-k^{\prime},n$, $A_{{\rm h},i}$ is $R$. For $i=n-k^{\prime},n-k$, $A_{{\rm h},i}$ is $R$. For $i \neq n-k-k^{\prime},n-k^{\prime},n-k,n$, $A_{{\rm h},i}$ is zero. The $i$-th module of $A_{\rm ch}$ is isomorphic to this graded module over $R$.
\item
Similar facts as the seven lists from {\rm (}\ref{thm:2.3.1}{\rm)} to {\rm (}\ref{thm:2.3.7}{\rm)} in Theorem \ref{thm:2} hold where $A^{\prime}:=A_{\rm ch}$, $A^{\prime \prime}$ is same as that in the situation of Theorem \ref{thm:2} and $r$'s in {\rm (}\ref{thm:2.3.6}{\rm)} and {\rm (}\ref{thm:2.3.7}{\rm)} are replaced by $r_p$'s.
\item $f^{\prime \prime \prime}$ can be obtained in Theorem \ref{thm:4.1} as an explicit example for the case $r_0=2p \neq 0$ in the situation of Theorem \ref{thm:4.1}.
\end{enumerate}

Furthermore, for $k^{\prime}=2,4,8$ and $p>1$, we can replace $R:=\mathbb{Z}/2p\mathbb{Z}$ by $R:=\mathbb{Z}/p\mathbb{Z}$ and $r_0=2p$ by $r_0=p$.
\end{Thm}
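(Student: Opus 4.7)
The plan is to execute the bundle construction used in the proof of Theorem \ref{thm:4.1}, but with all computations performed over $R=\mathbb{Z}/2p\mathbb{Z}$ and with the integer Euler number of the chosen bundle selected so that its mod-$2p$ reduction encodes $r_p$. Concretely, I would take the $(l+1)$-st generating manifold $S_{l+1}$ to be the total space of an oriented linear $S^k$-bundle over $S^{k'}$, placed near $\partial W_{f_0}\subset W_{f''}$ so that its zero section represents the prescribed multiple of the UFG dual to $c_0$, with $F$ taken to be a fiber of the bundle. The fiber has trivial normal bundle and is a UFG of $H_k(S_{l+1};R)$, so the hypotheses of Theorem \ref{thm:1} (\ref{thm:1.2}) are met.

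First I would compute $H^\ast(S_{l+1};R)$ via the Gysin sequence of the bundle, choosing the integer Euler number to be $2\tilde{r}_p$ for an integer lift $\tilde{r}_p$ of $r_p$; in the exceptional dimensions $k'\in\{2,4,8\}$ the Hopf invariant one theorem permits Euler number $\tilde{r}_p$ directly. Since each $H^j(S^{k'};R)$ is free cyclic of order $2p$, the Gysin sequence breaks into short exact sequences that split as sequences of $R$-modules, yielding $H^j(S_{l+1};R)\cong R$ in degrees $j=0,k,k',k+k'$ and $0$ otherwise; the homology follows by universal coefficients. Proposition \ref{prop:3} and Theorem \ref{thm:1} (\ref{thm:1.2}) then deliver the claimed module structure of $A_{{\rm h},i}$ and of the graded pieces of $A_{\rm ch}$.

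The product structure would be read off exactly as in the proof of Theorem \ref{thm:2}: the identities (\ref{thm:2.3.1})--(\ref{thm:2.3.5}) are formal consequences of the splitting and of Definitions \ref{def:4}--\ref{def:5} and carry over verbatim to mod-$2p$ coefficients, while the analogues of (\ref{thm:2.3.6}) and (\ref{thm:2.3.7}) follow by repeating the intersection-theoretic calculation at the end of the proof of Theorem \ref{thm:1} (\ref{thm:1.2.2.3}). There, the cycle representing the dual of $c_0$ meets the fiber $F$ and the section $S^{k'}$ in a count congruent modulo $2p$ to the Euler number of the bundle, producing the factor $r_p$ (respectively $2r_p$ in the non-parallelizable case). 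The final claim that $f^{\prime\prime\prime}$ arises from Theorem \ref{thm:4.1} with $r_0=2p$ is immediate, since that construction uses integer Euler number $4p\equiv 0\pmod{2p}$ and thus realizes the subcase $r_p=0$; other integer lifts recover the full range of $r_p\in R$.

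The main technical obstacle is the precise bookkeeping for the Gysin-sequence splitting over $R=\mathbb{Z}/2p\mathbb{Z}$ when the reduced Euler class is nonzero, together with verifying that the diagonal-induced cup product on $W_{f^{\prime\prime\prime}}$ pulls back the Euler number of the bundle to the coefficient $r_p$ appearing in the final product formulas. The factor-of-two dichotomy between generic even $k'$ and $k'\in\{2,4,8\}$ is the same Adams-type obstruction used in Theorem \ref{thm:4.1}: a linear $S^{k'-1}$-bundle over $S^{k'}$ admits odd Euler number iff $k'\in\{2,4,8\}$. Once these are handled, the remaining claims reduce to verifications already established in Theorems \ref{thm:1}, \ref{thm:2}, and \ref{thm:4.1}.
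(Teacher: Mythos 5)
There is a genuine error in your choice of bundle, and it is exactly the point on which the theorem turns. You take $S_{l+1}$ to be the $S^{k}$-bundle over $S^{k'}$ with integer Euler number $2\tilde r_p$ for a lift $\tilde r_p$ of $r_p$, and then assert that the Gysin sequence over $R=\mathbb{Z}/2p\mathbb{Z}$ splits to give $H^{j}(S_{l+1};R)\cong R$ in degrees $0,k,k',k+k'$. It does not: since $k=k'-1$, the only nontrivial Gysin differential is cup product with the Euler class on $H^{0}(S^{k'};R)\rightarrow H^{k'}(S^{k'};R)$, i.e.\ multiplication by $2r_p$ on $R$, so one gets $H^{k'}(S_{l+1};R)\cong R/2r_pR$ and $H^{k}(S_{l+1};R)\cong \ker(\times 2r_p)$. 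Unless $2r_p=0$ in $R$ these are not free cyclic, which contradicts the claimed structure of $A_{{\rm h},i}$ and $A_{\rm ch}$ (namely $R$ in all four degrees $n-k-k'$, $n-k'$, $n-k$, $n$). The paper avoids this precisely by fixing the Euler number to be $2p$ (and $p$ when $k'=2,4,8$, $p>1$), so that the Euler class vanishes after reduction mod $2p$ (resp.\ mod $p$) and the $R$-cohomology of $S_{l+1}$ is that of the product $S^{k'}\times S^{k}$; this is also why the theorem asserts that $f'''$ arises as the $r_0=2p\neq 0$ case of Theorem \ref{thm:4.1}, independently of $r_p$.

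The second, related confusion is where $r_p$ enters. It is not encoded in the Euler number at all: as in the proof of Theorem \ref{thm:2}, the coefficient in the product formulas replacing (\ref{thm:2.3.6}) and (\ref{thm:2.3.7}) comes from the homological positioning of the generating manifold, i.e.\ from choosing $S_{l+1}$ near $\partial W_{f_0}$ so that the section over $S^{k'}$ represents $r_p{c_0}^{\ast}$ (equivalently, $\Sigma a_{j'}e_{j'}$ with the appropriate coefficients), after which the intersection-theoretic computation of (\ref{thm:1.2.2.3}) produces $r_p$. Your proposal conflates these two independent parameters --- the topology of the bubbled space (governed by the Euler number, which must die in $R$) and the attaching data in $W_{f''}$ (which carries $r_p$) --- and as a consequence both the module computation and your reading of the final item (that Theorem \ref{thm:4.1} with $r_0=2p$ only realizes $r_p=0$) come out wrong. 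Correcting the construction to the paper's choice of Euler number and moving $r_p$ into the choice of the representing cycle repairs the argument; the remaining steps you outline (Gysin sequence, Proposition \ref{prop:3}, and the product calculations inherited from Theorems \ref{thm:1} and \ref{thm:2}) are then the same as the paper's.
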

\begin{proof}[A sketch of the proof.]
We can prove as Theorem \ref{thm:4.1}.
In the proof of Theorem \ref{thm:2}, we set $S_{l+1}$ as the total space of an oriented linear $S^{k}$-bundle over $S^{k^{\prime}}$ whose Euler number is $2p$ instead. For $k^{\prime}=2,4,8$ and $p>1$, we set $S_{l+1}$ as the total space of an oriented linear $S^{k}$-bundle over $S^{k^{\prime}}$ whose Euler number is $p$.
\end{proof}
\begin{Cor}
Theorem \ref{thm:4.2} with Theorem \ref{thm:4.1} presents pairs of Reeb spaces for each of which the following properties hold.
\begin{enumerate}
\item The cohomology rings of both of the pair are isomorphic under the condition that the coefficient ring is $\mathbb{Z}$.  
\item The cohomology rings of the two Reeb spaces are not isomorphic under the condition that the coefficient ring is $\mathbb{Z}/p\mathbb{Z}$ or $\mathbb{Z}/2p\mathbb{Z}$. 
\end{enumerate} 
\end{Cor}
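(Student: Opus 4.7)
The plan is to realize pairs $(W_1, W_2)$ by applying Theorems \ref{thm:4.1} and \ref{thm:4.2} in parallel to a common base map $f^{\prime \prime}$. By the final item of Theorem \ref{thm:4.2}, every fold map produced there with parameter $p$ already falls within the framework of Theorem \ref{thm:4.1} at $r_0 = 2p$ (respectively $r_0 = p$ when $k^{\prime} \in \{2,4,8\}$). This supplies the integer ring isomorphism for any such pair, while the distinction modulo $p$ or $2p$ is powered by the independent parameter $r_p$ appearing in Theorem \ref{thm:4.2}.

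First I would fix the base map $f^{\prime \prime}$, the UFG $c_0$ of degree $k^{\prime}$, and the structural integers $k = k^{\prime}-1, k^{\prime}, n, p$ satisfying the numerical hypotheses of Theorem \ref{thm:4.2}. Construct $W_1$ by Theorem \ref{thm:4.1} with $R = \mathbb{Z}$ and $r_0 = 2p$ (so that the parameter $r = 2p$), and construct $W_2$ by Theorem \ref{thm:4.2} with the same $p$ and the choice $r_p = 1 \in \mathbb{Z}/2p\mathbb{Z}$. Since both $W_1$ and $W_2$ satisfy the description of Theorem \ref{thm:4.1} at $r_0 = 2p$, their integer cohomology rings carry identical graded module decompositions (in particular, the torsion summand $\mathbb{Z}/4p\mathbb{Z}$ at degree $n - k$) and identical multiplication formulas inherited from Theorem \ref{thm:2}. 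A term-by-term identification of the canonical generators then produces the required graded ring isomorphism $H^{\ast}(W_1; \mathbb{Z}) \cong H^{\ast}(W_2; \mathbb{Z})$.

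For the distinction over $\mathbb{Z}/2p\mathbb{Z}$, I would compute the two analogous cup products on the $A^{\prime}$ summand. In $H^{\ast}(W_1; \mathbb{Z}/2p\mathbb{Z})$, the product of $c_0$ with the generator of $A^{\prime}$ in degree $n - k - k^{\prime}$ is obtained by reducing the integer product $r \cdot (\text{generator of } A^{\prime} \text{ in degree } n - k) = 2p \cdot (\text{generator})$ along the coefficient map $\mathbb{Z}/4p\mathbb{Z} \to \mathbb{Z}/2p\mathbb{Z}$, and its image is zero. In $H^{\ast}(W_2; \mathbb{Z}/2p\mathbb{Z})$, the same product equals $r_p = 1 \neq 0$ by Theorem \ref{thm:4.2} applied to item (\ref{thm:2.3.6}) with $r$ replaced by $r_p$. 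The main obstacle is ruling out subtle graded ring isomorphisms that realign the generators to accidentally match the two product structures. I would handle this by invoking the pairwise distinctness of the degrees $k^{\prime}, n-k-k^{\prime}, n-k^{\prime}, n-k, n$ (forced by $k^{\prime} > 0$, $k + k^{\prime} \leq n/2$, and $n \geq 3$) together with the UFG property of $c_0$: these constraints force any candidate graded ring isomorphism to carry $c_0$ and the distinguished degree-$(n-k-k^{\prime})$ generator of $A^{\prime}$ each to a unit multiple of themselves, so a vanishing product cannot correspond to a unit product and the two mod-$2p$ cohomology rings are non-isomorphic.
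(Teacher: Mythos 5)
The paper states this corollary with no proof at all, so the only thing to measure your argument against is the implicit content of Theorems \ref{thm:4.1} and \ref{thm:4.2}; read that way, your overall strategy (same integral data, different mod-$2p$ product $r_p$) is the intended one, but as written it has two genuine gaps. First, you justify the integral isomorphism by saying that both $W_1$ and $W_2$ ``satisfy the description of Theorem \ref{thm:4.1} at $r_0=2p$.'' Taken literally this proves too much: if the two spaces were instances of one and the same construction, their cohomology rings would agree with every coefficient ring, and the second half of the corollary would be false. What actually makes the pair work is that the construction at fixed Euler number carries a further free parameter --- the class $r_p{c_0}^{\ast}$ represented by the embedded copy of $S^{k^{\prime}}\times\{\ast^{\prime}\}$ (the analogue of $r$ in Theorem \ref{thm:2}) --- and Theorem \ref{thm:4.1} says precisely that for $r_0\neq 0$ this parameter is invisible over $\mathbb{Z}$ (the products (\ref{thm:2.3.6})--(\ref{thm:2.3.7}) all become $0$), while Theorem \ref{thm:4.2} says it survives as $r_p$ over $\mathbb{Z}/2p\mathbb{Z}$. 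You need to name the two members of the pair as the same bundle construction with two different values of this parameter (say $r_p=0$ and $r_p=1$); only then is the integral isomorphism nontrivial and the mod-$2p$ distinction consistent.

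Second, your computation of the cup product in $H^{\ast}(W_1;\mathbb{Z}/2p\mathbb{Z})$ by ``reducing the integer product along $\mathbb{Z}/4p\mathbb{Z}\rightarrow\mathbb{Z}/2p\mathbb{Z}$'' is not a valid method here. By the universal coefficient theorem the mod-$2p$ cohomology of $W_1$ contains classes coming from ${\rm Tor}$ of the integral torsion (for instance the whole degree-$(n-k^{\prime})$ summand of $A_{\rm ch}$, which is zero integrally but equals $R$ mod $2p$ by Theorem \ref{thm:4.2}), and products involving such classes are not determined by the integral ring; this failure of determination is exactly the phenomenon the corollary is advertising, so you cannot use integral reduction to conclude that the relevant product in $W_1$ vanishes. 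The correct step is to apply the product formula of Theorem \ref{thm:4.2} to both members of the pair, with their respective values of $r_p$, and then run your final rigidity argument (distinctness of the degrees $k^{\prime}$, $n-k-k^{\prime}$, $n-k^{\prime}$, $n-k$, $n$ and the UFG property forcing any graded ring isomorphism to send the distinguished generators to unit multiples of themselves), which is sound for separating a zero product from a product equal to a generator. With those two repairs your argument matches what the paper intends.
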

Last we present a result which may be fundamental and useful in constructing new stable fold maps from known ones. It resembles the last theorem of \cite{kitazawa5} for statements on homology groups.
\begin{Thm}
\label{thm:5}
Let $f$ be a stable fold map from a closed and connected manifold of dimension $m$ into ${\mathbb{R}}^n$ satisfying $m>n \geq 1$.

Let $R$ be a PID. For any integer $0 \leq j \leq n$, let a finitely generated module $G_j$ over $R$ be given such that $G_0$ is trivial and that $G_n$ is not zero. 

Let a stable special generic map $f_0$ be given. Assume that by
 performing a finite iteration of S-bubbling operations starting from this special generic map whose generating polyhedra are in the interior of the Reeb space of this original special generic map disjointly so that at each step the remaining polyhedra are apart from the bubbled spaces and by considering a connected sum of $f$ and the resulting map $f_1$ just before, we can obtain a fold map $f^{\prime}$ such that the module $H_j(W_{{f}^{\prime}};R)$ is isomorphic to the module $H_j(W_f;R) \oplus G_j$: note that $G_n$ is free by virtue of Proposition \ref{prop:3}. Let $H$ be
 a non-trivial submodule of $G_n$ {\rm (}and free{\rm )}. 

Last, assume also that the connected components of preimages born by bubbles in the finite iteration of the operations to get $f_1$ from $f_0$ are all diffeomorphic to a given homotopy sphere.

In this situation, the following two facts hold.
\begin{enumerate}
\item Starting from the given map $f$ and the special generic map $f_0$ above similarly, we can obtain a fold map $f^{\prime \prime}$ by a finite iteration of S-bubbling operations starting from this special generic map $f_0$ whose generating polyhedra are in the interior of the Reeb space of this original special generic map and after that by considering a connected sum of $f$ and the new resulting map. 
\item Furthermore, we can construct $f^{\prime \prime}$ satisfying the following properties.
\begin{enumerate}
\item The module $H_j(W_{{f}^{\prime \prime}};R)$ is isomorphic to the module $H_j(W_f;R) \oplus G_j$ for $0 \leq j \leq n-1$.
\item The module $H_n(W_{{f}^{\prime \prime}};R)$ is isomorphic to the module $H_n(W_f;R) \oplus H \subset H_n(W_f;R) \oplus G_n$.
\item The product on the ring $H^{\ast}(W_{{f}^{\prime \prime}};R)$ is defined by composing
 the product on the ring $H^{\ast}(W_{{f}^{\prime}};R)$ with a natural projection to $H^{\ast}(W_{{f}^{\prime \prime}};R)$. For $0 \leq j \leq n$, only in the case $j=n$, $H^j(W_{{f}^{\prime \prime}};R)$ is not isomorphic to $H^j(W_{{f}^{\prime \prime}};R)$ and they are regarded as $H^n(W_f;R) \oplus H \subset H^n(W_f;R) \oplus G_n$, respectively. The natural projection is regarded as the identity map on the $j$-th module for $0 \leq j<n$ considering natural identifications of the $j$-th modules for $0 \leq j \leq n$. 
\end{enumerate} 
\end{enumerate}
\end{Thm}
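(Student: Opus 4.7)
The plan is to construct $f^{\prime\prime}$ by consolidating the S-bubbling operations used to build $f^{\prime}$. Let $k$ be the number of S-bubbling operations performed on $f_0$ in the construction of $f^{\prime}$, so by Proposition \ref{prop:3} we have $G_n \cong R^k$ (each operation contributes exactly one free summand $R$ to $H_n$), and let $k^{\prime}$ be the rank of $H$, so $1 \leq k^{\prime} \leq k$. I will perform exactly $k^{\prime}$ S-bubbling operations on $f_0$, which by Proposition \ref{prop:3} automatically yields $R^{k^{\prime}}$ in the $H_n$-component, a module isomorphic to $H$. Writing $S_1, \ldots, S_k$ for the original generating polyhedra (each itself a bouquet of submanifolds of $\mathrm{Int}\, W_{f_0}$), I fix an index $i_0 \in \{1, \ldots, k^{\prime}\}$ and form the bouquet $\tilde{S} := S_{i_0} \vee S_{k^{\prime}+1} \vee \cdots \vee S_k$ by isotoping the submanifolds making up $S_{k^{\prime}+1}, \ldots, S_k$ inside $\mathrm{Int}\, W_{f_0}$ (and away from the loci of $S_1, \ldots, S_{k^{\prime}-1}$) until each of them meets $S_{i_0}$ at a single common wedge point. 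The $k^{\prime}$ S-bubbling operations used to build $f^{\prime\prime}$ are then those with generating polyhedra $S_1, \ldots, S_{i_0-1}, \tilde{S}, S_{i_0+1}, \ldots, S_{k^{\prime}}$, each with the prescribed homotopy sphere as the bubble; a connected sum with $f$ finally produces $f^{\prime\prime}$.

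Applying Proposition \ref{prop:3} operation by operation, the total contribution to $H_j(W_{f^{\prime\prime}};R)$ for $j<n$ is the direct sum $\bigoplus_T H_{j-(n-\dim T)}(T;R)$ running over the bouquet components $T$ appearing across all $k^{\prime}$ generating polyhedra listed above. This multiset of components is exactly the one appearing in the $k$ original operations used for $f^{\prime}$ (only the grouping into operations differs), so the sum equals $G_j$ and we obtain $H_j(W_{f^{\prime\prime}};R) \cong H_j(W_f;R) \oplus G_j$ for $0 \leq j \leq n-1$. In degree $n$ each of the $k^{\prime}$ operations contributes one free summand $R$, giving $H_n(W_{f^{\prime\prime}};R) \cong H_n(W_f;R) \oplus R^{k^{\prime}}$, isomorphic to $H_n(W_f;R) \oplus H$ since both $H$ and $R^{k^{\prime}}$ are free of rank $k^{\prime}$.

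The ring statement follows from dimension considerations. Since $W_{f^{\prime\prime}}$ is an $n$-dimensional polyhedron by Proposition \ref{prop:1}, $H^i = 0$ for $i > n$, and so $a \smile b = 0$ whenever $a$ has positive degree and $b$ lies in degree $n$. Consequently any $R$-submodule of $H^n(W_{f^{\prime}};R)$ is automatically an ideal of the graded ring $H^{\ast}(W_{f^{\prime}};R)$. In our construction the $k^{\prime}$ free summands of $H^n(W_{f^{\prime\prime}};R) = H^n(W_f;R) \oplus R^{k^{\prime}}$ correspond canonically to $k^{\prime}$ of the $k$ free summands of $H^n(W_{f^{\prime}};R) = H^n(W_f;R) \oplus R^k$; the complementary $R^{k-k^{\prime}}$ summand of $G_n$ is then an ideal of $H^{\ast}(W_{f^{\prime}};R)$, and the quotient by this ideal is isomorphic as a graded ring to $H^{\ast}(W_{f^{\prime\prime}};R)$. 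The \emph{natural projection} of the theorem is precisely this quotient map: the identity in degrees $<n$ and the evident coordinate projection $H^n(W_f;R) \oplus R^k \twoheadrightarrow H^n(W_f;R) \oplus R^{k^{\prime}}$ in degree $n$.

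The main obstacle I anticipate is justifying that $\tilde{S}$ can be realized as an admissible generating polyhedron of the S-bubbling procedure of Definition \ref{def:3} (i.e., as a properly embedded bouquet in $\mathrm{Int}\, W_{f_0}$ with a well-defined regular neighborhood), and that the bubble produced by bubbling along $\tilde{S}$ is still the prescribed homotopy sphere. For the first point, I plan to perform the isotopies inside a small ball of $\mathrm{Int}\, W_{f_0}$ along arcs connecting chosen points of each $S_j$ ($j \geq k^{\prime}+1$) to a chosen point of $S_{i_0}$, exploiting $\dim S_j < n$ and the connectedness of $\mathrm{Int}\, W_{f_0}$ to avoid the loci of $S_1, \ldots, S_{k^{\prime}-1}$; for the second point, I plan to argue that the S-bubbling construction is local around each bouquet summand and produces its bubble summand by summand (visible in the proof of Proposition \ref{prop:3}), so that the regrouping of summands into a larger bouquet leaves the bubble unchanged.
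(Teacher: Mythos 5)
Your proposal is correct and follows essentially the same route as the paper: the paper's proof likewise merges $l+1=k-k^{\prime}+1$ of the original generating polyhedra into a single bouquet (without changing their isotopy classes in ${\rm Int}\,W_{f_0}$), so that by Proposition \ref{prop:3} the degree-$n$ contribution drops from $R^{k}$ to $R^{k^{\prime}}$ while the lower-degree contributions, being sums over the unchanged multiset of bouquet components, are preserved; the hypothesis that all preimages born by bubbles are a fixed homotopy sphere is used, exactly as you note, to make the merged S-bubbling operation admissible. Your dimension argument for the ring statement is a reasonable filling-in of what the paper only gestures at.
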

\begin{proof}
The main ingredient of the proof is how we take the generating polyhedra. If the difference between the ranks of $G_n$ and $H$ is $l>0$, then we can choose $l+1$ of the original generating polyhedra in the interior of the Reeb space of the original special generic map $f_0$ and consider the bouquet without changing the isotopy classes of the original generating polyhedra in the (interior of the) Reeb space (of the special generic map). 

We can consider a new finite iteration of bubbling operations respecting this situation starting
 from the given special generic map since the connected components of preimages born by bubbles in the finite iteration of the operations to get $f_1$ from $f_0$ are all diffeomorphic to a given homotopy sphere and after that we can perform a connected sum of $f$ and this new resulting map. By virtue of some discussions in the proofs of Proposition \ref{prop:3} and Theorem \ref{thm:1} and the topologies of the Reeb spaces, we can see that the resulting stable fold map is a desired one.  
\end{proof}
We can apply Theorem \ref{thm:5} to some maps constructed by using Theorems \ref{thm:1}, \ref{thm:2}, \ref{thm:3}, \ref{thm:4.1}, \ref{thm:4.2} and so on. For example, we can obtain fold maps such that the cohomology rings of the Reeb spaces are not isomorphic to ones obtained in \cite{kitazawa6} again.
We can also apply Proposition \ref{prop:2} to some maps constructed via applications of these theorems.





\end{document}